\documentclass[12pt]{amsart}
\usepackage{amssymb}
\usepackage{graphicx}
\usepackage[T1]{fontenc}

\setlength{\textheight}{9in}
\setlength{\textwidth}{6.5in}
\setlength{\oddsidemargin}{0in}
\setlength{\evensidemargin}{0in}

\pagestyle{plain}
\usepackage{amssymb}
\usepackage{graphicx}

% Trick to render a ``mod'' with less intermediate space than \pmod
\makeatletter
\def\imod#1{\allowbreak\mkern5mu({\operator@font mod}\,\,#1)}
\makeatother

\newcommand{\nin}{\notin}

\newcommand{\cross}{\times}

\newcommand{\ceil}[1]{\( \lceil #1 \) \rceil}

\renewcommand{\(}{\left}
\renewcommand{\)}{\right}

\renewcommand{\H}{\mathcal{H}}

\newcommand{\R}{\mathbb{R}}
\newcommand{\Z}{\mathbb{Z}}

\newcommand{\e}{\varepsilon}
\renewcommand{\t}{\theta}

\newcommand{\G}{{\ensuremath \mathcal{G}}}
\newcommand{\0}{{\sf 0}}
\newcommand{\1}{{\sf 1}}
\newcommand{\s}{{\ensuremath \sigma}}

\newcommand{\tbin}[2]{{\ensuremath\textstyle \binom{#1}{#2}}}
\newcommand{\dbin}[2]{{\ensuremath\displaystyle \binom{#1}{#2}}}
\newcommand{\cc}{{\ensuremath \mathcal{C}}}
\newcommand{\prim}[2]{\text{prim}_{#1}(#2)}
\newcommand{\pattern}[1]{[#1]}
\newcommand{\V}[1]{\mbox{\boldmath{$#1$}}}
\newcommand{\integral}[4]{\int_{#1}^{#2} #3\; {\rm d} #4}

\renewcommand{\L}{L^2[0,1]}
\newcommand{\LR}{L_{\R}^2[0,1]}

\theoremstyle{plain}
\newtheorem{thm}{Theorem}
\newtheorem{corol}[thm]{Corollary}
\newtheorem{lemma2}[thm]{Lemma}
\newtheorem{prop}[thm]{Proposition}

\theoremstyle{definition}
\newtheorem*{defn}{Definition}
\newtheorem*{eg}{Example}
\newtheorem*{note}{Note}
\newtheorem*{obs}{Observation}

\begin{document}

\title{Control of cancellations that restrain the growth of a binomial
       recursion}
%\shorttitle{Control of growth restraining cancellations}
\author{Magnus Aspenberg}
\author{Rodrigo P\'erez}
\address{LD-224R IUPUI, 402 N. Blackford St., Indianapolis, IN 46202, USA}
\email{rperez@math.iupui.edu}
\thanks{The second author was supported by NSF grant DMS-0701557}
%\author{Magnus Aspenberg, Rodrigo P{\'e}rez\affil{2}}
%\abbrevauthor{M. Aspenberg, R. P{\'e}rez}
%\headabbrevauthor{Aspenberg, M., P{\'e}rez, R.}
%\affilnum{2}{IUPUI, 402 N. Blackford St., Indianapolis, IN 46202, USA}
%\correspdetails{rperez@math.iupui.edu}

\begin{abstract}
  We study a recursion that generates real sequences depending on a parameter
  $x$. Given a negative $x$ the growth of the sequence is very difficult to
  estimate due to canceling terms. We reduce the study of the recursion to a
  problem about a family of integral operators, and prove that for every
  parameter value except $-1$, the growth of the sequence is factorial. In the
  combinatorial part of the proof we show that when $x=-1$ the resulting
  recurrence yields the sequence of alternating Catalan numbers, and thus has
  exponential growth. We expect our methods to be useful in a variety of
  similar situations.
\end{abstract}

\maketitle

%\snlong{3.5}{Too many "note that"}
%\snlong{3.5}{Fix the issue with indices $r$ and $j$.}

% The bracket below closes just before the bibliography.
{\renewcommand{\l}{{\ensuremath \lambda}}

\section{Introduction}
Fix an arbitrary real number $x \neq 0$, and consider the sequence defined by
the recursive expression
\begin{equation}
 \label{eqn:Formula}
  a_1 = x \quad , \qquad
  a_n = x \sum_{r=\ceil{\frac{n}{2}}}^{n-1} \dbin{r}{n-r} a_r.
\end{equation}

For $x = 1$, formula~\eqref{eqn:Formula}
produces the sequence $1, 2, 7, 34, 214, 1652, \ldots$ Note that the last
summand $\tbin{n-1}{1} a_{n-1}$ guarantees that $a_n > (n-1)!$ This means that
$\{ a_n \}$ grows very fast since $\displaystyle n!  > (n/e)^n$. We prove

\begin{thm}
 \label{thm:Superexponential}
  For any real $x \neq -1, 0$ the sequence $\{ a_n \}$ defined by
  \eqref{eqn:Formula} grows super-exponentially.
\end{thm}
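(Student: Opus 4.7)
\noindent\emph{Proof plan.} For $x > 0$ the statement is immediate from the positivity of all terms (noted already in the introduction): the summand indexed by $r=n-1$ alone gives $a_n \geq x(n-1)a_{n-1}$, hence $a_n \geq x^n(n-1)!$ by induction, which beats any geometric sequence. The interesting case is $x<0$, $x\neq -1$, where cancellations obstruct any direct term-by-term estimate.

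My approach there is to study the ordinary generating function $F(z)=\sum_{n\geq 1} a_n z^n$. Setting $k=n-r$ and swapping the order of summation transforms the recursion \eqref{eqn:Formula} into the functional equation
\[
 (1+x)F(z) \;=\; xz + x\,F\bigl(z(1+z)\bigr).
\]
At $x=-1$ this collapses to $F(z(1+z))=-z$, which solves explicitly to $F(z)=\tfrac{1}{2}(1-\sqrt{1+4z})$, the generating series of the alternating Catalan numbers (radius of convergence $1/4$). For $x\neq -1$, putting $\mu=x/(1+x)$ and $\phi(z)=z(1+z)$, the equation reads $F(z)=\mu z+\mu F(\phi(z))$. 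Super-exponential growth of $\{a_n\}$ is equivalent to $F$ having zero radius of convergence at the origin, so the plan reduces to ruling out any positive radius of convergence for every $x\neq -1$.

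I would proceed in three stages. First, rewriting the functional equation as $F(\phi(z))=\tfrac{1+x}{x}F(z)-z$ shows that singularities of $F$ propagate forward under $\phi$: if $F$ is singular at some $z_0\neq 0$, then $F$ is also singular at $\phi(z_0)$. Since $\phi$ has a parabolic fixed point at $0$ (with $\phi'(0)=1$) and $\phi(z)>z$ on $(-1,0)$, the forward orbit $\phi^k(z_0)$ of any $z_0\in(-1,0)\setminus\{0\}$ converges monotonically to $0$; a single real singularity in $(-1,0)$ would therefore force singularities accumulating at the origin, killing any positive radius of convergence. Second, to produce such a singularity I would recast the equation $F=\mu z+\mu(F\circ\phi)$ as a resolvent problem $(I-\mu T_x)F=\mu z$ for a family of integral operators $T_x$ on $L^2[0,1]$; the change of variable $y=-z$ conjugates $\phi$ to the logistic map $y(1-y)$ on $[0,1]$ and furnishes a natural kernel for $T_x$. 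Third, a spectral analysis of the family $\{T_x\}_{x\neq -1}$ should show that $\mu=x/(1+x)$ is never in the resolvent set, forcing $F$ to have a real singularity in $(-1,0)$; combined with the first stage this gives zero radius of convergence.

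The main obstacle I anticipate is the spectral stage: controlling the spectrum of $T_x$ uniformly in $x$ and verifying that $x=-1$ is the only degenerate parameter, i.e.\ the only value at which the operator-theoretic obstruction disappears and $F$ acquires a positive radius of convergence. The singularity-propagation step is elementary parabolic dynamics, the $x>0$ argument is elementary, and the identification of $F$ at $x=-1$ with the alternating-Catalan generating series is a short verification from the functional equation; it is the uniform spectral picture that carries the real content of the theorem.
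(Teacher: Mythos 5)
Your functional equation $(1+x)F(z) = xz + xF(z(1+z))$ is correctly derived and is a nice observation, and the elementary $x>0$ bound and the $x=-1$ identification are both sound. The route is genuinely different from the paper's: the paper never touches the generating function, but instead encodes the intermediate quantities $S_n(j)$ (the contribution to $a_n$ of patterns whose last entry is $j$) as step functions $s_n \in L^2[0,1]$ whose one-step recursion $s_{n+1}=A_n s_n$ is literally an integral operator with an explicit piecewise-constant kernel; the operators $A_n$ converge in norm to a limit $T$ with kernel $x$ on $\{u\geq v\}$ and $1+x$ on $\{u<v\}$, whose eigenfunctions and eigenvalues are written down in closed form, and the factorial growth comes from tracking the projection of $s_n$ onto the top eigenspace. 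So the two integral-operator pictures are not the same object at all, despite the surface resemblance.

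That said, your plan has two genuine gaps. First, the passage from ``positive radius of convergence $R$'' to ``a real singularity of $F$ in $(-1,0)$'' is unaddressed. Pringsheim applied to $F(-z)$ would put a singularity at $-R$ provided the coefficients $a_n$ eventually alternate in sign for $x\in(-1,0)$, but that is exactly the kind of claim the cancellations make delicate, and the paper neither asserts nor uses it; and even granting it, the parabolic-propagation step only forces $R=0$ or $R\geq 1$, since for $|z_0|\geq 1$ the forward orbit of $-R$ under $\phi$ no longer falls toward $0$. Second, and more seriously, stages 2--3 are not yet an argument. The operator hiding in $F=\mu z+\mu(F\circ\phi)$ is the composition operator $C_\phi$, which is \emph{not} an integral operator on $L^2[0,1]$ --- a polynomial change of variable contributes no kernel, so it is unclear what space $T_x$ is to act on or where a kernel would come from. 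More to the point, even if you fix a Banach space on which $C_\phi$ acts and show $1/\mu\in\text{spec}\,C_\phi$, that does not by itself make the formal power series solution of $(I-\mu C_\phi)F=\mu z$ diverge at $0$: the formal solution exists unconditionally, and deciding whether a functional equation around a \emph{parabolic} fixed point yields a convergent series or a Gevrey-divergent one is a local asymptotic/resurgence question, not a Fredholm-alternative statement. You flag this stage yourself as the obstacle, and as written the proposal does not cross it; a proof along these lines would need both a Pringsheim-type lemma tailored to this recursion and a genuine asymptotic analysis near the parabolic point, neither of which is sketched.
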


This is an interesting behavior, and not altogether obvious because when $x <
0$, there are a lot of cancellations. In fact, when $x = -1$, the positive and
negative terms exactly balance out to yield a surprising contrast:

\begin{thm}
 \label{thm:Catalan}
  When $x = -1$, formula~\eqref{eqn:Formula} produces the sequence $(-1)^n
  C_n$ of Catalan numbers with alternating signs, and therefore grows
  exponentially.
\end{thm}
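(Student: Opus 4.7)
The plan is to verify by induction that $a_n = (-1)^n C_{n-1}$ satisfies recurrence~\eqref{eqn:Formula} with $x=-1$, where $C_0=1, C_1=1, C_2=2,\ldots$ are the usual Catalan numbers; the exponential growth conclusion then follows immediately from $C_n \sim 4^n / (n^{3/2}\sqrt{\pi})$. After confirming the base case $a_1 = -1 = -C_0$, I would substitute the inductive hypothesis into the right-hand side of~\eqref{eqn:Formula}, move the resulting $a_n$ term over (which contributes the $k=0$ summand below), and re-index by $k = n-r$. The theorem then reduces to proving the binomial--Catalan identity
\[
  \sum_{k=0}^{\lfloor n/2 \rfloor} (-1)^k \binom{n-k}{k}\, C_{n-k-1} \;=\; 0 \qquad \text{for all } n \geq 2.
\]

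To prove this identity I would use generating functions. Forming $F(t) = \sum_{n \geq 1} \bigl(\sum_{k} (-1)^k \binom{n-k}{k} C_{n-k-1}\bigr) t^n$ and re-indexing via $m = n-k-1$, the inner binomial series sums to $\sum_{k \geq 0}(-1)^k \binom{m+1}{k} t^k = (1-t)^{m+1}$ by the binomial theorem, so the double sum collapses cleanly to
\[
  F(t) \;=\; t(1-t)\sum_{m \geq 0} C_m\, \bigl(t(1-t)\bigr)^m \;=\; t(1-t)\,C\bigl(t(1-t)\bigr),
\]
where $C(x) = \sum_{m \geq 0} C_m\, x^m$ is the Catalan generating function. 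The heart of the proof is then to verify that $F(t)$ collapses further to the single monomial $t$. This drops out of the Catalan functional equation $C(x) = 1 + x\, C(x)^2$: with $x = t(1-t)$, the candidate $C = 1/(1-t)$ satisfies $\tfrac{1}{1-t} = 1 + \tfrac{t(1-t)}{(1-t)^2}$, whence $C\bigl(t(1-t)\bigr) = 1/(1-t)$ and $F(t) = t$. Consequently the coefficient of $t^n$ in $F(t)$ vanishes for every $n \geq 2$, which is exactly the required identity.

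The main obstacle, such as it is, lies in recognizing that the substitution $x = t(1-t)$ makes the Catalan series collapse to $1/(1-t)$; once spotted, the combinatorial identity becomes an algebraic one-liner. Without this observation one could fall back on induction using Pascal's rule and the Catalan recurrence $C_n = \sum_i C_i C_{n-1-i}$, but this is notationally heavier and hides why the cancellations balance so precisely. A sign-reversing involution on the Dyck paths counted by $C_{n-k-1}$ might also yield a bijective proof, but appears less economical than the generating-function route.
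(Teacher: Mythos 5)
Your proof is correct, but it follows a genuinely different route from the paper's. The paper exploits the combinatorial machinery it has just built: in the ``binomial format'' $a_n = \sum_r \prim{n}{r}\, x^r(1+x)^{n-r}$, setting $x=-1$ annihilates every term except $r=n$, leaving $a_n = (-1)^n \prim{n}{n}$; the quantity $\prim{n}{n}$ counts arrays that are both primitive and canonical, i.e.\ non-decreasing patterns of length $n-1$, and these are placed in bijection with monotone lattice paths from $(0,0)$ to $(n-1,n-1)$ staying weakly below the diagonal, which Lemma~\ref{lemma:CountMonotonePaths} counts via the reflection argument. You instead substitute the conjectured closed form into the recursion and reduce to the binomial--Catalan identity $\sum_{k\geq 0}(-1)^k\binom{n-k}{k}C_{n-k-1}=0$ for $n\geq 2$, which you dispatch by observing that the substitution $x = t(1-t)$ collapses the Catalan generating function, $C(t(1-t)) = 1/(1-t)$, so that $F(t) = t(1-t)\,C(t(1-t)) = t$. (Your indexing $a_n = (-1)^n C_{n-1}$ with $C_0 = 1$ agrees with the paper's $a_n = (-1)^n C_n$ since the paper starts the Catalan sequence at $C_1 = 1$.) Your argument is self-contained and sidesteps the array/hypercube formalism entirely; the paper's takes longer to set up but yields a combinatorial interpretation of the $x=-1$ case --- the surviving contributions are precisely the Catalan-many primitive-canonical arrays --- and integrates naturally with the binomial decomposition that Section~\ref{sect:FirstProofs} also uses for the $x \nin [-1,0]$ case.
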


The phenomenon at play is very interesting. In the first part of the paper
some combinatorial constructions will allow us to prove the results when $x >
0$ and when $x \leq -1$. However, when $x \in (-1,0)$, the cancellations and
the small size of $x$ conspire to render elementary arguments ineffective. In
the second part we introduce functional analytic methods to control the effect
of the cancellations in that case. We expect these ideas to be useful in a
variety of similar situations.

\subsection{Structure}
The paper has two parts. In the first (sections \ref{sect:Combinatorics} to
\ref{sect:FirstProofs}) we prove Theorem~\ref{thm:Catalan} and the case $x
\nin [-1,0]$ of
Theorem~\ref{thm:Superexponential}. Section~\ref{sect:Combinatorics} presents
some basic facts about hypercube graphs and the Catalan numbers;
Section~\ref{sect:Structure-a_n} defines the combinatorial structure we use;
and Section~\ref{sect:FirstProofs} contains the proofs.

The second part (sections \ref{sect:xIn(-1,0)} to
\ref{sect:Proof-Norms1AndInfty}) tackles the case $x \in (-1,0)$ of
Theorem~\ref{thm:Superexponential}. Section~\ref{sect:xIn(-1,0)} uses the
combinatorial knowledge gained in the first part to derive an alternative
expression for $a_n$ as the sum of a sequence of numbers $S_n(1), \ldots,
S_n(n-1)$ constructed recursively. This sequence is translated into a function
$s_n \in \L$, and the recursion is interpreted as an integral
operator. Section~\ref{sect:FunctionalApproach} contains the proof of the
theorem assuming the statement of Lemma~\ref{lemma:Norms1AndInfty}, and
Section~\ref{sect:Proof-Norms1AndInfty} is devoted to the proof of
Lemma~\ref{lemma:Norms1AndInfty}.

\section{Basic Combinatorial Facts}
\label{sect:Combinatorics}
\subsection{Hypercubes}
\label{sect:Hypercubes}
The {\em hypercube graph} $\H_n$ is the graph whose set of vertices $V_n$
consists of all $n$-vectors with coordinates $\0$ or $\1$. Two
vertices are adjacent whenever they differ in one coordinate. There is a
natural stratification of $V_n$ by the number of coordinates of each
value in a vertex; accordingly, let $V_{n,r} \subset V_n$ denote the vertices
with $r$ coordinates equal to $\1$. There are other equivalent definitions of
hypercube graphs. The advantage of the definition in terms of
binary coordinates is that the following facts become obvious; compare
Figure~\ref{fig:Hypercubes}.

\renewcommand{\theenumi}{H\arabic{enumi}}
\begin{enumerate}
  \item \label{item:HC-Vertices}
        $|V_n| = 2^n$.
  \item \label{item:HC-Stratification}
         $\displaystyle |V_{n,r}| = \tbin{n}{r}$.
  \item \label{item:HC-CrossProduct}
        If $n = m_1 + m_2$, then $\H_n = \H_{m_1} \cross \H_{m_2}$.
\end{enumerate}
Incidentally, items \eqref{item:HC-Vertices} and
\eqref{item:HC-Stratification} give a succinct proof of the binomial
identity $\sum_{r=0}^n \tbin{n}{r} = 2^n$. If instead of just counting
vertices, they are assigned weight $x^j$, item~\eqref{item:HC-CrossProduct}
furnishes a recursive proof of Newton's binomial formula
\begin{enumerate}
  \setcounter{enumi}{3}
  \item \label{item:HC-Binomial}
        $\displaystyle \sum_{j=0}^n \tbin{n}{r} x^j = (1+x)^n$.
\end{enumerate}

\begin{figure}[h]\refstepcounter{figure}\addtocounter{figure}{-1}
 \label{fig:Hypercubes}
  \begin{center}
    \includegraphics{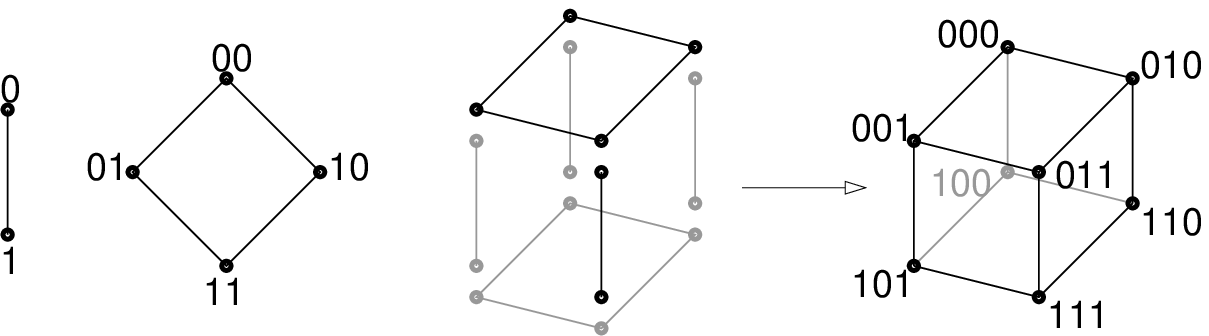}
    \caption{The hypercube graphs $\H_1$, $\H_2$, $\H_3$, and a decomposition
             of the latter as $\H_1 \cross \H_2$.}
  \end{center}
\end{figure}

\subsection{Catalan Numbers and Lattice Paths}\label{sect:Catalan}
The Catalan numbers $1, 1, 2, 5, 14, 42, 132, \ldots$ \cite[{\sf
A000108}]{OEIS} are defined by the formula
\[C_{n+1} = \frac{\dbin{2n}{n}}{(n+1)}.\]

%\[C_1 = 1 \quad , \qquad C_n = \sum_{r=1}^n C_r C_{n-r},\]
%The generating function of the sequence $\{ C_n \}$ is
%\[
%  \sum_{n=0}^{\infty} C_n x^n =
%  \frac{1 - \sqrt{1-4x}}{2x},
%\]

The exponential rate of growth of the sequence $\{ C_n \}$ follows easily from
Stirling's formula:
\[
  C_{n+1} =
  \frac{(2n)!}{(n!)^2(n+1)} \sim
  \frac{\sqrt{2 \pi (2n)} \( ( \frac{2n}{e} \) )^{2n}}
       {\( ( \sqrt{2 \pi n} \( ( \frac{n}{e} \) )^n \) )^2 (n+1)} =
  \frac{2^{2n}}{\sqrt{\pi n} (n+1)} \sim
  \frac{4^n}{\sqrt{\pi}\, n^{3/2}}.
\]

\begin{defn}
  A {\em lattice path} is a path in the lattice $\Z \cross \Z$ that moves one
  horizontal or vertical unit at every step without self-intersections. We
  consider {\em monotone paths}, which never move left nor down. Note that a
  monotone path from $(0,0)$ to $(m,n)$ requires $m+n$ steps. Choosing one
  such path is tantamount to deciding which of these steps will be the $m$
  horizontal steps, so the number of monotone lattice paths from $(0,0)$ to
  $(m,n)$ is $\tbinom{m+n}{m}$.
\end{defn}

%For $n \geq 1$ we have

%\begin{lemma}
% \label{lemma:CountDexterPaths}
%  The number of dexter paths from $(0,0)$ to $(n,n)$ that do not cross over
%  the diagonal $\{ y = x \}$ is equal to $n!$.
%\end{lemma}

\begin{lemma2}
 \label{lemma:CountMonotonePaths}
  The number of monotone paths from $(0,0)$ to $(n,n)$ that do not cross over
  the diagonal $\{ y = x \}$ is equal to $C_n$.
\end{lemma2}

\begin{figure}[h]\refstepcounter{figure}\addtocounter{figure}{-1}
 \label{fig:LatticePaths}
  \begin{center}
    \includegraphics{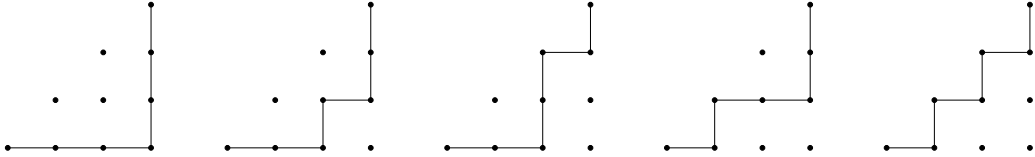}
    \caption{The $C_3 = 5$ monotone paths from $(0,0)$ to $(3,3)$.}
  \end{center}
\end{figure}

%\begin{proof}[Proof of Lemma~\ref{lemma:CountDexterPaths}]
%  There are $n$ horizontal steps, with $j$ choices for the height of the
%  $j^{\rm th}$ horizontal step. Connecting the ends shows that all choices
%  yield valid paths.
%\end{proof}

\begin{proof}[Proof of Lemma~\ref{lemma:CountMonotonePaths}]
  A monotone path $\gamma$ from $(0,0)$ to $(n,n)$ that crosses over the
  diagonal will pass through a point $(j,j+1)$. Let $P$ be the first such
  point, and $\gamma'$ the portion of $\gamma$ going from $P$ to
  $(n,n)$. Reflecting $\gamma'$ on the diagonal $\{ y = x+1 \}$ transforms
  $\gamma$ into a monotone path from $(0,0)$ to $(n-1,n+1)$. This operation is
  bijective because such paths {\em must} cross over the diagonal. Therefore
  the number of monotone paths from $(0,0)$ to $(n,n)$ that do not cross over
  the diagonal equals the number of all monotone paths from $(0,0)$ to
  $(n,n)$, minus the number of monotone paths from $(0,0)$ to $(n-1,n+1)$;
  i.e.,
  \[
    \dbin{2n}{n} - \dbin{2n}{n-1} =
    \dbin{2n}{n} - \frac{n}{n+1} \dbin{2n}{n} =
    \frac{\dbin{2n}{n}}{n+1}. \qedhere
  \]
\end{proof}

\section{Structure of $a_n$}\label{sect:Structure-a_n}
\subsection{Signatures and Binomial Products}\label{sect:SignaturesAndBroducts}
The internal structure of the expression $a_n$ is better understood by
separating the different contributions of weight $x^r$. After expanding the
recursive expressions in \eqref{eqn:Formula}, the first few terms are
\begin{align}
 \label{eqn:ExpandedFormula}
  a_1 &=&=& x, \\
  \notag
  a_2 &= x \( [ \tbin{1}{1} a_1 \) ] &=& \tbin{1}{1} x^2, \\
  \notag
  a_3 &= x \( [ \tbin{2}{1} a_2 \) ] &=& \tbin{2}{1} \tbin{1}{1} x^3, \\
  \notag
  a_4 &= x \( [ \tbin{2}{2} a_2 + \tbin{3}{1} a_3 \) ] &=&
           \tbin{2}{2} \tbin{1}{1} x^3 +
           \tbin{3}{1} \tbin{2}{1} \tbin{1}{1} x^4, \\
  \notag
  a_5 &= x \( [ \tbin{3}{2} a_3 + \tbin{4}{1} a_4 \) ] &=&
           \tbin{3}{2} \tbin{2}{1} \tbin{1}{1} x^4 +
           \tbin{4}{1} \tbin{2}{2} \tbin{1}{1} x^4 + \\
  \notag
      &&&  \qquad \tbin{4}{1} \tbin{3}{1} \tbin{2}{1} \tbin{1}{1} x^5.
\end{align}

This symbolic manipulation makes it clear that $a_n$ is the sum of all
products of the form
\begin{equation}
 \label{eqn:ContributionBySignature}
  \dbin{b_s}{n-b_s} \dbin{b_{s-1}}{b_{s-1}-b_{s-2}} \ldots
  \dbin{b_2}{b_2-b_1} \dbin{b_1}{1} \cdot x^{s+1},
\end{equation}
such that
\begin{equation}
 \label{eqn:SignatureConditions}
  n =: b_{s+1} > b_s > b_{s-1} > \ldots > b_1 = 1\,, \quad
  \text{and}\quad
  b_{j+1} \leq 2b_j {\ } (j=1, \ldots, s).
\end{equation}
The last condition is a consequence of the fact that the sum in
\eqref{eqn:Formula} starts at $r = \ceil{\frac{n}{2}}$. Note that this
condition forces $b_2 = 2$.

\begin{defn}
  A tuple $\s = (n, b_s, \ldots, b_1)$ satisfying
  \eqref{eqn:SignatureConditions} is called an {\em $n$-signature}. The
  $n$-signature that contains all the numbers from
  1 to $n$ is called {\em canonical}.
\end{defn}

\begin{note}
  Compare the recursion \eqref{eqn:Formula} with the similar looking $\alpha_1
  = x$, $\alpha_n = x \sum_{r=\ceil{n/2}}^{n-1} \alpha_r$ in which the
  binomial coefficients have been removed. From the above discussion we see
  that $\alpha_n$ is the sum of weights $x^s$, taken over all signatures $(n,
  b_s, \ldots, b_1)$.  Replacing $x$ with 1 shows that the number of distinct
  $n$-signatures is given by the recursion
  \[
    N_1 = 1 \quad , \qquad
    N_n = \sum_{r=\ceil{\frac{n}{2}}}^{n-1} N_r.
  \]
  The numbers $\{ N_n \} = \{ 1, 1, 1, 2, 3, 6, 11, 22, 42, \ldots \}$ form
  the Narayana-Zidek-Capell sequence \cite[{\sf A002083}]{OEIS}.
\end{note}

\subsection{Arrays and blocks}\label{sect:ArraysAndBlocks}
When faced with an expression made of binomial coefficients, the natural thing
to ask is what kind of combinatorial object is being counted. To a given
signature $\s = (n, b_s, \ldots, b_1)$ we will assign a {\em tower} that can
be filled with an {\em array} of numbers in exactly $\tbin{b_s}{n-b_s} \ldots
\tbin{b_1}{1}$ ways.

\begin{defn}
  Given $\s = (n, b_s, \ldots, b_1)$, consider a {\em tower} of $n-1$ square
  cells split into {\em blocks} of lengths $(n-b_s), (b_s-b_{s-1}), \ldots,
  (b_3-b_2), (b_2-b_1)$ from top to bottom as in Figure~\ref{fig:Towers}. The
  {\em position} of a block is the height $b_j$ of its lowest cell, so the
  signature condition $b_{j+1} \leq 2b_j$ implies that a block is never taller
  than its position. An {\em array} associated to $\s$ is an assignment of
  numbers to every cell in the tower of $\s$ such that the numbers in the
  $j^{\rm th}$ block (at position $b_j$) are chosen from the set $\{ 1, 2,
  \ldots, b_j \}$ and appear in descending order. An array associated to the
  canonical signature is also called {\em canonical}.
\end{defn}

\begin{figure}[h]\refstepcounter{figure}\addtocounter{figure}{-1}
 \label{fig:Towers}
  \begin{center}
    \includegraphics{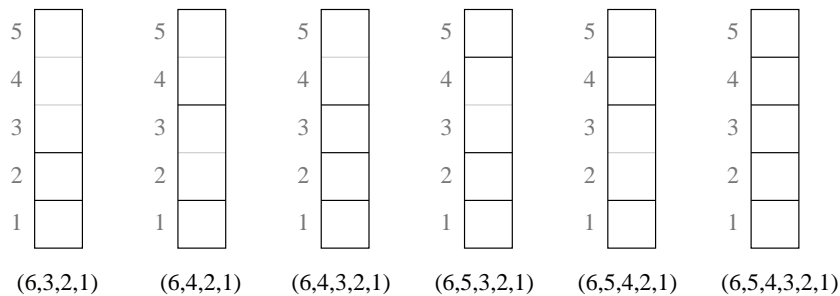}
    \caption{The towers associated to all 6-signatures. The second tower for
             instance, has blocks at positions 1, 2, and 4. The rightmost
             signature is the canonical one.}
  \end{center}
\end{figure}

\begin{figure}[h]\refstepcounter{figure}\addtocounter{figure}{-1}
 \label{fig:Arrays}
  \begin{center}
    \includegraphics{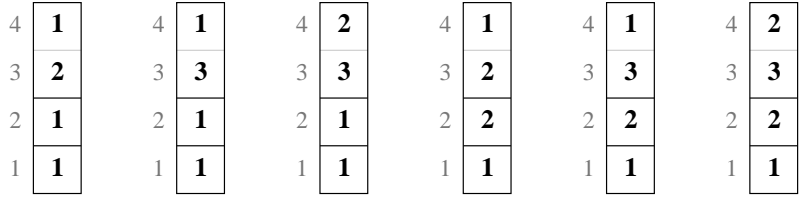}
    \caption{The 5-signature $(5,3,2,1)$ has six associated arrays: The block
    at position 2 can hold either a 1 or a 2, while the block at position 3
    can hold any descending combination of the numbers $1,2,3$.}
  \end{center}
\end{figure}

\begin{lemma2}
 \label{lemma:SignatureNumbers}
  Let $\s = (n,b_s, \ldots, b_1)$ be an $n$-signature. Then
  \renewcommand{\theenumi}{\alph{enumi}}
  \begin{enumerate}
    \item \label{item:ArrayCount}
          The number of $n$-arrays associated to $\s$ is
          $\tbin{b_s}{n-b_s} \ldots \tbin{b_1}{1}$.
    \item \label{item:CanonicalArrayCount}
          The number of canonical $n$-arrays is $(n-1)!$
    \item \label{item:TotalArrayCount}
          The total number of $n$-arrays is given by
          formula~\eqref{eqn:Formula} when $x = 1$.
  \end{enumerate}
\end{lemma2}

\begin{proof}
  The tower associated to $\s$ has $s$ blocks. The $j^{\rm th}$ block is based
  at position $b_j$ and its length is $b_{j+1} - b_j$ (for the topmost block
  the length is $n - b_s$). Therefore the $j^{\rm th}$ block can be filled
  with an arbitrary choice of $b_{j+1} - b_j$ numbers between 1 and $b_j$;
  i.e., $\tbin{b_j}{b_{j+1} - b_j}$ possibilities. This proves
  \eqref{item:ArrayCount}, from which item \eqref{item:CanonicalArrayCount}
  follows immediately. To prove \eqref{item:TotalArrayCount}, note from
  \eqref{eqn:ContributionBySignature} that $a_n$ counts $n$-arrays with weight
  $x^s$. Thus, when $x=1$, $a_n$ simply counts the number of $n$-arrays as
  claimed.
\end{proof}

\begin{defn}
  The sequence of numbers that specifies an array is called a {\em
  pattern}. We convene to read patterns from the bottom up; thus, for
  instance, the rightmost array in Figure~\ref{fig:Arrays} has pattern
  $\pattern{1232}$.
\end{defn}

\begin{lemma2}
  A tuple $\pattern{t_1, \ldots, t_{n-1}}$ is a valid pattern if and only if
  \[
    t_j \leq j \text{ for all } 1 \leq j \leq n-1.
  \]
\end{lemma2}

\begin{proof}
  In a canonical array every block has length one. This means that the
  position of the $j^{\rm th}$ block is $b_j = j$, and the number in this
  block is $t_j$. Thus, in this case, the pattern condition is equivalent to
  $t_j \leq b_j = j$, proving the result for canonical arrays.

  In a non-canonical array, the cell at position $j$ belongs to a block at
  position $i \leq j$. The pattern condition states that the number $t_j$ in
  that cell must be at most $i$, and the result follows.
\end{proof}

\subsection{Array Hypercubes}\label{sect:ArrayHypercubes}
\begin{defn}
  If an array has a block at position $p$ with more than one cell, the block
  can be split into two shorter blocks. The result is a valid array since
  the blocks have positions $p$ and $p+\eta > p$ ($\eta$ is the location of
  the split within the original block), and the numbers in both blocks are all
  at most $p$. We call this operation on arrays a {\em split}. Note that an
  array can usually be split in several ways, all of which commute. Moreover,
  repeated splitting eventually results in a canonical array.

  The reverse operation is also well defined. If an array has two consecutive
  blocks at positions $p$ and $p+\eta$, and the numbers contained in both
  blocks run together in descending order, the two blocks can be combined into
  a single one. This is because the new block is at position $p$ and contains
  numbers in descending order, which means that the length of the new block
  cannot exceed its position. In other words,
  condition~\eqref{eqn:SignatureConditions} is satisfied. This operation on
  arrays is called a {\em merge}. As with splitting, merge operations are
  commutative, and repeated merging must terminate. An array where no pair of
  blocks can be merged is called {\em primitive}.
\end{defn}

\begin{defn}
  The graph $\G_n$ on the set of $n$-arrays is defined by joining any two
  arrays related by a single split/merge operation. Arrays belong to the same
  connected component of $\G_n$ when they have the same pattern of numbers
  (disregarding block divisions). Note that a split/merge is possible at a
  given position if and only if the numbers at that position are in descending
  order. In particular, splitting/merging does not depend on the structure of
  blocks in an array, but only on the pattern of numbers. This yields the
  following lemma.
\end{defn}

\begin{lemma2}
 \label{lemma:ArrayHypercubes}
  Every connected component of $G_n$ is homeomorphic to a hypercube graph.
\end{lemma2}

\begin{proof}
  Consider an array $A \in \G_n$. The connected component $\cc$ of $A$
  consists of all arrays with the same pattern of numbers as $A$. This pattern
  has $\ell$ {\em descents} (locations where the numbers are in descending
  order). Now view such locations as placeholders for a symbol $\1$ or $\0$
  depending on whether two blocks of $A$ meet at that location or not. This
  puts the arrays of $\cc$ in correspondence with vertices of the hypercube
  graph $\H_{\ell}$; see Figure~\ref{fig:ArrayHypercube}. Since a split/merge
  depends only on the pattern of numbers, all edges of $\H_{\ell}$ are
  included and $\cc$ is homeomorphic to $\H_{\ell}$.
\end{proof}

\begin{obs}
  Every hypercube $\cc \subset \G_n$ has a unique primitive array and a unique
  canonical array. In particular, the numbers of hypercubes in $\G_n$ and of
  primitive $n$-arrays are both equal to $(n-1)!$ Also, if $\ell$ is as in the
  proof above, the primitive array has $s = n - \ell - 1$ blocks (because the
  canonical array has $n-1$), so $\cc$ is homeomorphic to $\H_{n-s-1}$.
\end{obs}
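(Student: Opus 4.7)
The approach is to interpret the observation through the hypercube-coordinate description of $\cc$ furnished by the proof of Lemma~\ref{lemma:ArrayHypercubes}. Under the bijection $\cc \leftrightarrow \H_\ell$, each of the $\ell$ descents in the common pattern is assigned a binary coordinate recording whether two blocks of the array meet at that descent or not. The two extreme vertices---all coordinates $\0$ and all coordinates $\1$---are thus distinguished: one corresponds to splitting at every descent, so every cell is its own block, which is by definition the canonical array of $\cc$; the other corresponds to merging at every descent, which leaves no further merge possible and so is the primitive array of $\cc$. Existence and uniqueness within $\cc$ are then immediate, since $\H_\ell$ has exactly one vertex of each extremal type.

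For the counting claim, note that distinct hypercubes in $\G_n$ are disjoint, being the connected components, so canonical arrays belonging to different hypercubes are automatically distinct. Hence the number of hypercubes equals the total number of canonical $n$-arrays, which is $(n-1)!$ by Lemma~\ref{lemma:SignatureNumbers}(\ref{item:CanonicalArrayCount}). Running the same argument with ``primitive'' in place of ``canonical'' yields that the number of primitive $n$-arrays is also $(n-1)!$

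For the relation between $s$ and $\ell$, I would track the block structure along any path in $\H_\ell$ from the canonical vertex to the primitive vertex. The canonical array has exactly $n-1$ blocks (one per cell), and flipping a single coordinate from ``split'' to ``merged'' combines two adjacent blocks into one, reducing the total block count by exactly one. Reaching the primitive array requires flipping all $\ell$ coordinates, so the primitive array has $n-1-\ell$ blocks; equating this with $s$ gives $\ell = n-s-1$ and thus $\cc$ is homeomorphic to $\H_{n-s-1}$.

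There is no substantive obstacle here, since once the hypercube identification of Lemma~\ref{lemma:ArrayHypercubes} is accepted, the observation is really a dictionary lookup. The only point requiring mild care is to confirm that the coordinate-by-coordinate description interacts cleanly with the block count at each step; this follows from the commutativity of split/merge noted in the earlier definition, together with the fact that positions where the pattern does not descend must always carry a block boundary, leaving the $\ell$ descents as the only degrees of freedom.
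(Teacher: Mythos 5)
Your proposal is correct and takes essentially the same approach the paper has in mind: the observation is presented without a separate proof precisely because it is read off the hypercube identification just established in Lemma~\ref{lemma:ArrayHypercubes}. Identifying the all-$\1$ vertex with the canonical array, the all-$\0$ vertex with the primitive array, noting that connected components partition the arrays so that counting components amounts to counting canonical (or primitive) arrays via Lemma~\ref{lemma:SignatureNumbers}(\ref{item:CanonicalArrayCount}), and tracking that each of the $\ell$ merges reduces the block count by exactly one, is exactly the intended dictionary lookup.
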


\begin{figure}[h]\refstepcounter{figure}\addtocounter{figure}{-1}
 \label{fig:ArrayHypercube}
  \begin{center}
    \includegraphics{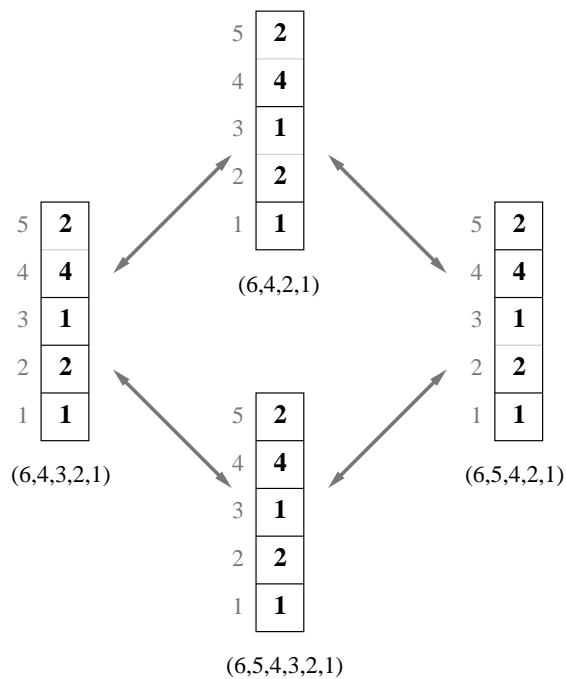}
    \caption{The pattern $\pattern{12142}$ has 2 descents, and thus determines
             four \mbox{6-arrays} connected by split/merge operations into a
             square $\H_2$.}
  \end{center}
\end{figure}

\section{The First Proofs}\label{sect:FirstProofs}
Each $a_n$ is a polynomial in $x$. When written with the monomials $\xi_r x^r$
in ascending order by degree, we say $a_n$ is in {\em basic format}. Since
every $n$-array with $r-1$ blocks contributes $x^r$ to the total $a_n$, the
coefficient $\xi_r$ counts the number of such arrays.

\begin{eg}
  The first few $a_n$ in basic format are (compare
  \eqref{eqn:ExpandedFormula}):
  \begin{align}
   \label{eqn:BasicFormats}
    \notag a_1 &= x, \\
    \notag a_2 &= x^2, \\
    \notag a_3 &= 2 x^3, \\
    \notag a_4 &= x^3 + 6 x^4,  \\
    \notag a_5 &= 10 x^4 + 24 x^5, \\
           a_6 &= 8 x^4 + 86 x^5 + 120 x^6.
  \end{align}

  When $n = 6$ for instance, we know that the towers with 4 blocks correspond
  to the three signatures $(6,4,3,2,1)$, $(6,5,3,2,1)$, and $(6,5,4,2,1)$ (see
  Figure~\ref{fig:Towers}). These have
  $\tbin{4}{2}\tbin{3}{1}\tbin{2}{1}\tbin{1}{1} = 36$,
  $\tbin{5}{1}\tbin{3}{2}\tbin{2}{1}\tbin{1}{1} = 30$, and
  $\tbin{5}{1}\tbin{4}{1}\tbin{2}{2}\tbin{1}{1} = 20$ associated arrays
  respectively, and we see that each of these 86 arrays contributes $x^5$ to
  the value of $a_6$.
\end{eg}

Recall that the array graph $\G_n$ consists of isolated hypercube components.
We can also break $a_n$ down as a sum of contributions by hypercubes. Let $A$
be a primitive $n$-array with $s = r-1$ blocks. We know that $A$ contributes
$x^r$ to $a_n$. Since the connected component $\cc$ of $\G$ containing $A$ is
homeomorphic to $\H_{n-r}$, items \eqref{item:HC-Stratification} and
\eqref{item:HC-Binomial} in Section~\ref{sect:Hypercubes} give
\begin{equation}
 \label{eqn:HypercubeContribution}
  \sum_{a \in \cc} x^{(\#\text{blocks of }a)+1}
    \stackrel{\eqref{item:HC-Stratification}}{=}
  \sum_{j=0}^{n-r} \dbin{n-r}{j} x^{r+j}
    \stackrel{\eqref{item:HC-Binomial}}{=}
  x^r(1+x)^{n-r}.
\end{equation}

Let $\prim{n}{r}$ be the number of primitive arrays with $r-1$ blocks; this is
also the number of components of $\G_n$ homeomorphic to
$\H_{n-r}$. Equation~\eqref{eqn:HypercubeContribution} shows that the total
contribution to $a_n$ of all arrays in all such hypercubes is $\prim{n}{r}
\cdot x^r(1+x)^{n-r}$, so

\begin{equation}
 \label{eqn:BinomialDecomposition}
  a_n =
  \sum_{r=\ceil{\log_2 n}}^n \prim{n}{r} \cdot x^r(1+x)^{n-r}
\end{equation}
(Condition~\eqref{eqn:SignatureConditions} implies that the least possible
number of blocks is $\ceil{\log_2 n}$).

When $a_n$ is written in that form, we can read that each of the $\prim{n}{r}$
components of $\G_n$ with dimension $n - r$ contributes $x^r(1+x)^{n-r}$ to
the total sum $a_n$. We will say that $a_n$ is in {\em binomial format}.

\begin{eg}
  Let us compute the binomial format of $a_6$. From \eqref{eqn:BasicFormats}
  we see that there are 8 arrays with 3 blocks. All of these must be
  primitive, so $\prim{6}{4} = 8$. These arrays (together with those obtained
  by splitting) determine 8 copies of $\H_2$ in $\G_6$. Gathering the
  monomials of all arrays in these hypercubes gives
  \begin{multline*}
    a_6 = 8 x^4 + 86 x^5 +120 x^6 = \\
    (8 x^4 + 16 x^5 +8 x^6) + 70 x^5 +112 x^6 = \\
    8 x^4(1+x)^2 + 70 x^5 + 112 x^6.
  \end{multline*}
  The remaining 70 arrays with 4 blocks must be primitive since they do not
  belong to an $\H_2$; i.e., $\prim{6}{5} =70$. These arrays (together with
  those obtained by splitting) determine 70 copies of $\H_1$ in $\G_6$. Thus,
  \[
    8 x^4(1+x)^2 + 70 x^5 + 112 x^6 =
    8 x^4(1+x)^2 + 70 x^5(1+x)^1 + 42 x^6.
  \]
  The first few $a_n$ in binomial format are:
  \begin{align*}
    \notag a_1 &= x^1 y^0, \\
    \notag a_2 &= x^2 y^0, \\
    \notag a_3 &= 2 x^3 y^0, \\
    \notag a_4 &= x^3 y^1 + 5 x^4 y^0, \\
    \notag a_5 &= 10 x^4 y^1 + 14 x^5 y^0, \\
           a_6 &= 8 x^4 y^2 + 70 x^5 y^1 + 42 x^6 y^0,
  \end{align*}
  where $y$ stand for $1+x$. This convetion makes for cleaner looking
  expressions, and {\em will be consistently used in the rest of the paper}.
\end{eg}

Now that the combinatorial structure is in place, the proofs of
Theorem~\ref{thm:Catalan} and the case $x \nin [-1,0]$ of
Theorem~\ref{thm:Superexponential} are straightforward.

\begin{proof}[Proof of Theorem \ref{thm:Catalan}]
  Since $x = -1$, the only non-zero term in \eqref{eqn:BinomialDecomposition}
  occurs when $r = n$. In other words,
  \[a_n = (-1)^n \prim{n}{n}.\]
  Now, $\prim{n}{n}$ is the number of $n$-arrays with $n-1$ blocks; that is,
  arrays that are both primitive and canonical. By definition, these are
  arrays with non-decreasing patterns of $n-1$ numbers. Any such pattern can
  be associated to a monotone lattice path from $(0,0)$ to $(n-1, n-1)$ that
  does not cross over the diagonal. Simply substract 1 from each entry in the
  pattern and interpret the results as heights of the horizontal steps of a
  monotone path (compare Figure~\ref{fig:LatticePaths}). This procedure is
  bijective, so by Lemma~\ref{lemma:CountMonotonePaths}, $\prim{n}{n} = C_n$.
\end{proof}

\begin{proof}
 [Proof of Theorem~\ref{thm:Superexponential} when {$x \nin [-1,0]$}]

  Depending on the sign of $x$, one of the two formats for $a_n$ displays no
  cancellations.

  \smallskip\noindent\underline{$x > 0$}: All monomials $\xi_r x^r$ in the
  basic format of $a_n$ are positive because the coefficient $\xi_r$ counts
  arrays with $r-1$ blocks. In particular, $a_n$ is larger than the highest
  order monomial. The coefficient of this monomial is the number of arrays
  with the most blocks; i.e., canonical arrays. By
  Lemma~\ref{lemma:SignatureNumbers},
  \[a_n > (n-1)! \cdot x^n.\]

  \smallskip\noindent\underline{$x < -1$}: Since $y<0$, all terms $\prim{n}{r}
  \cdot x^r y^{n-r}$ in the binomial format of $a_n$ have the same sign, and
  do not cancel each other. Also, $|x| > |y|$, so
  \[
    |a_n| =
    \sum_r \prim{n}{r} \cdot |x^r y^{n-r}| >
    \sum_r \prim{n}{r} \cdot |y^n| =
    (n-1)! \cdot |y^n|,
  \]
  where the last equality follows from the observation after
  Lemma~\ref{lemma:ArrayHypercubes}. \\

  In both cases, $|a_n|$ is larger than $(n-1)! \cdot w^n$ for some positive
  $w$, and the result holds.
\end{proof}

\section{The case $x \in (-1,0)$}\label{sect:xIn(-1,0)}
The situation when $x \in (-1,0)$ is more delicate because the terms in both
the basic and binomial formats of $a_n$ have alternating signs. Our strategy
in this second part involves a different representation
(\eqref{eqn:Recover-a_n} and \eqref{eqn:PatternRecursion}) of $a_n$. We will
interpret the sequences $\{ S_n(1), \ldots S_n(n-1) \}$ as functions $s_n \in
\L$ and the recursion \eqref{eqn:PatternRecursion} as a sequence of integral
operators $A_n:s_n \mapsto s_{n+1}$. We will deduce some facts about the shape
of the graph of $s_n$, and about the limit operator $T = \lim A_n$. Then we
will use this information to show that the largest eigenvalue $\l$ of $T$
bounds from below the exponential rate of decay of $a_n/(n-2)!$ \\

\subsection{A new recursion}
So far we have established that $a_n$ is the sum of contributions of the form
$x^r$ over a large set of arrays (for each array, $r-1$ is the number of
blocks). We grouped arrays with the same number pattern into hypercube graphs,
and showed that $a_n$ is the sum of contributions $x^{n-\ell} (1+x)^\ell =
x^{n-\ell} y^\ell$ over hypercubes $\cc$, where $\ell$ is the dimension of
each $\cc$. This dimension is the number of descents in the associated
pattern, so we can abandon arrays and express $a_n$ directly as a sum of
contributions over patterns:
\[
  a_n =
  \sum_{\ell=0}^{n - \ceil{\log_2 n} - 1}
  \sum_{\substack{\text{patterns with} \\ \text{$\ell$ descents}}}
    x^{n-\ell} y^\ell.
\]
This allows us to sort the contributions to $a_n$ made by individual
patterns. To this end, consider an $n$-pattern $\pi = \pattern{t_1, \ldots,
t_{n-1}}$. If the truncated pattern $\pattern{t_1, \ldots, t_{n-2}}$
contributes $x^a y^b$ to $a_{n-1}$, then $\pi$ contributes $x^a y^{b+1}$ or
$x^{a+1} y^b$ to $a_n$ depending on whether $t_{n-1} < t_{n-2}$ or not (i.e.,
on whether $\pi$ has one extra descent or not at the last position). This
motivates the following definition.

\begin{defn}
  For $n \geq 2$ let $S_n(r)$ denote the sum of contributions of all patterns
  $\pattern{t_1, \ldots, t_{n-1}}$ such that $t_{n-1}$ equals $r$ (thus, $r$
  can take values in $\{ 1, \ldots, n-1 \}$). In particular, $S_2(1) = x^2$,
  and
  \begin{equation}
   \label{eqn:Recover-a_n}
    a_n = \sum_{j=1}^{n-1} S_n(j).
  \end{equation}
\end{defn}
By the previous argument, $S_{n+1}(r)$ can be computed from the contributions
of $n$-patterns:
\begin{equation}
 \label{eqn:PatternRecursion}
  S_{n+1}(r) =
  x \cdot \sum_{j=1}^r S_n(j) +
  y \cdot \sum_{j=r+1}^{n-1} S_n(j).
\end{equation}
(when $r = n-1$ or $n$, there is no descent in the last position, so
\eqref{eqn:PatternRecursion} should be interpreted to mean $S_{n+1}(n-1)
= S_{n+1}(n) = x \cdot \sum_{j=1}^{n-1} S_n(j)$).

\begin{figure}[h]\refstepcounter{figure}\addtocounter{figure}{-1}
 \label{fig:SequenceSample}
  \begin{center}
    \includegraphics[width=4in]{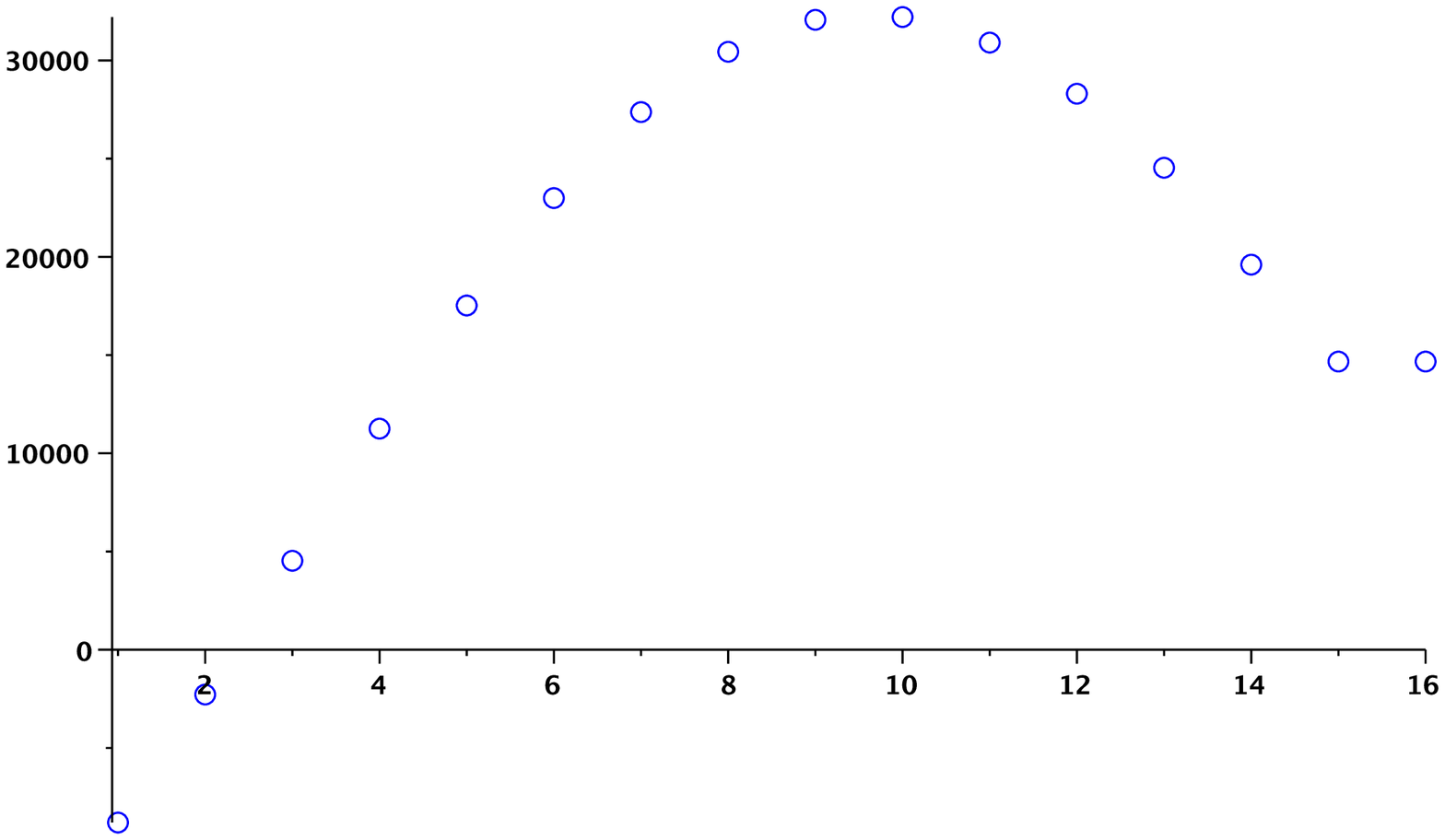}
    \caption{}
  \end{center}
\end{figure}

Figure~\ref{fig:SequenceSample} shows a plot of the values $\{ S_{16}(1),
\ldots, S_{16}(15) \}$ when $x = -1/2$. It is not coincidental that the graph
looks sinusoidal.

\subsection{Sinusoidal shape of \boldmath{$S_n$}}
Note that
\begin{equation}
 \label{eqn:BasicSequenceRelation}
  S_{n+1}(r) = S_{n+1}(r-1) - S_n(r).
\end{equation}
From this relation we can derive a more convenient method of computing the
sequence $S_n$:
\begin{itemize}
  \item $S_2(1) = x^2$; \text{ and for } $n \geq 3$,
  \item $S_n(0) = y \sum_{j=1}^{n-2} S_{n-1}(j)$,
  \item $S_n(r) = S_n(r-1) - S_{n-1}(r),\ (1 \leq r \leq n-2)$,
  \item $S_n(n-1) = S_n(n-2)$.
\end{itemize}

\begin{obs}
  $S_n(0)$ is not part of the original sequence, but we will find it easier to
  study the properties of $S_n$ by including this auxiliary term in the
  discussion. For instance, note that $S_n(0) = ya_{n-1}$ and $S_n(n-1) = x
  a_{n-1}$. Recall that $y = 1+x$ and $x \in (-1,0)$. {\em It is vital to the
  coming arguments that these two values have opposite signs}.
\end{obs}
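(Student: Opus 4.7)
The observation bundles two identities, $S_n(0)=y\,a_{n-1}$ and $S_n(n-1)=x\,a_{n-1}$, together with the sign contrast between those two products when $x\in(-1,0)$. My plan is to read both identities directly off the recursions already assembled, and then invoke the hypothesis on $x$ to compare signs. Conceptually nothing deep is at play; the only real care required is to keep the index shifts between $S_n$ and $S_{(n-1)+1}$ straight.

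For the first identity I would take the defining bullet $S_n(0) = y\sum_{j=1}^{n-2}S_{n-1}(j)$ and recognize the inner sum as $a_{n-1}$ via \eqref{eqn:Recover-a_n}: applied to index $n-1$, its summation runs from $j=1$ to $(n-1)-1 = n-2$, matching exactly. Anyone who prefers to re-derive the bullet rather than take it as a definition can do so by setting $r=1$ in \eqref{eqn:BasicSequenceRelation} and using the $r=1$ case of \eqref{eqn:PatternRecursion}, after which the coefficient of $S_n(1)$ collapses as $x+1=y$ and the expression closes up into $y\sum_{j=1}^{n-1}S_n(j)=y\,a_n$; then shift $n\mapsto n-1$.

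For the second identity I would return to \eqref{eqn:PatternRecursion} and use the boundary convention stated in the parenthetical just below it: when the final-position index equals $n-1$ or $n$, no descent is possible in the last slot, so every term in the recursion inherits the coefficient $x$ rather than $y$. Applying that convention with $n$ replaced by $n-1$ and $r=n-1$ yields $S_n(n-1) = x\sum_{j=1}^{n-2}S_{n-1}(j)$, and the sum is again $a_{n-1}$ by \eqref{eqn:Recover-a_n}. This simultaneously explains the final bullet $S_n(n-1)=S_n(n-2)$, since taking $r=n-2$ in the same shifted recursion kills the $y$-sum (it becomes empty) and leaves precisely $x\,a_{n-1}$.

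Finally, the sign statement follows because $x\in(-1,0)$ forces $x<0$ and $y=1+x\in(0,1)$, so $x$ and $y$ lie on opposite sides of zero; multiplying by any nonzero $a_{n-1}$ preserves that opposition. The main obstacle, such as there is one, is simply keeping the index bookkeeping honest between the two forms of the recursion. I note however that to license the sign contrast at the heart of the observation one does need $a_{n-1}\ne 0$, and this nonvanishing in the regime $x\in(-1,0)$ is a genuinely substantive fact which is not settled here and will presumably be extracted later, as part of the integral-operator analysis promised in the next sections.
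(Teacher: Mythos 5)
Your derivation of both identities is correct, and since the paper states this observation without proof, your reading is the right one. Briefly: $S_n(0) = y\sum_{j=1}^{n-2}S_{n-1}(j) = y\,a_{n-1}$ by the defining bullet together with \eqref{eqn:Recover-a_n} at index $n-1$, and the boundary convention attached to \eqref{eqn:PatternRecursion}, shifted from $n+1$ to $n$, gives $S_n(n-1)=S_n(n-2)=x\sum_{j=1}^{n-2}S_{n-1}(j)=x\,a_{n-1}$. Your alternate route via \eqref{eqn:BasicSequenceRelation} at $r=1$ with the collapse $x+1=y$ also checks out, and $x<0<y$ for $x\in(-1,0)$ is immediate.

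The caveat in your last paragraph is genuine and in fact deserves more weight than you give it. The opposite-signs claim needs $a_{n-1}\neq 0$, and this can fail inside $(-1,0)$: from \eqref{eqn:BasicFormats}, $a_4 = x^3(1+6x)$ vanishes at $x=-1/6$, so $S_5(0)=S_5(4)=0$ there; likewise $a_5=2x^4(5+12x)$ vanishes at $x=-5/12$. The paper never records this as a hypothesis, yet the proof of Proposition~\ref{prop:Shape}\ref{OneOfEach} quietly uses $S_n(0)$ and $S_n(n-2)$ being of \emph{strictly} opposite sign to force a sign change, and the rest of the shape analysis feeds off that. So this is not, as you suggest, something ``presumably extracted later'' from the integral-operator machinery --- that machinery is built \emph{on top of} the shape properties, which already presuppose the nonvanishing. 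A careful account must either exclude the finitely many exceptional indices for a given $x$ (and justify that only finitely many occur), or begin the shape argument at an index beyond the last zero of $n\mapsto a_n$.
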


\begin{defn}
  The sequence $S_n$ has
  \begin{itemize}
    \item a {\em sign change} if $n \geq 4$ and there are $a,b$ ($1 \leq a < b
          \leq n-2$) such that
          \[\begin{array}{rcccl}
            S_n(a) &<& 0 = S_n(a+1) = \ldots = S_n(b-1) &<& S_n(b)
            \text{ (an {\em up-change}) or} \\
            S_n(a) &>& 0 = S_n(a+1) = \ldots = S_n(b-1) &>& S_n(b)
            \text{ (a {\em down-change})},
          \end{array}\]
    \item an {\em extreme} if $n \geq 5$ and there are $a,b$ ($0 < a < b-1
          \leq n-3$) such that
          \[\begin{array}{rcccl}
            S_n(a) &<& S_n(a+1) = \ldots = S_n(b-1) &>& S_n(b)
            \text{ (a {\em maximum}) or} \\
            S_n(a) &>& S_n(a+1) = \ldots = S_n(b-1) &<& S_n(b)
            \text{ (a {\em minimum})},
          \end{array}\]
    \item an {\em inflection} if $n \geq 6$ and there are $a,b$ ($1 \leq a <
          b-2 \leq n-4$) such that
          \[\begin{array}{rcccl}
            0 < S_{n-1}(a+1) &<&
                S_{n-1}(a+2) = \ldots = S_{n-1}(b-1) &>&
                S_{n-1}(b) > 0
            \text{ or} \\
            0 > S_{n-1}(a+1) &>&
                S_{n-1}(a+2) = \ldots = S_{n-1}(b-1) &<&
                S_{n-1}(b) < 0.
          \end{array}\]
  \end{itemize}
  The pair $(a,b)$ is the {\em locus} of the change/extreme/inflection. We
  also say that the change/extreme/inflection is {\em located} at $a$.
\end{defn}

\begin{obs}
  Recall that $S_{n-1}(c) = S_n(c+1) - S_n(c)$, so this value acts as a
  ``discrete derivative'' of the sequence $S_n$ in the definition of
  inflection. Notice that in our inflections the slope at the center is
  steeper than at the sides. A corollary of property~\ref{NoBathtubs} below
  is that no other inflection shape is necessary. Also, note that $S_n(n-1)$
  is not allowed to be part of a change/extreme/inflection.
\end{obs}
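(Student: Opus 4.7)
The plan is to verify the three assertions packaged into this observation, all of which appear to follow quickly from material already on the page. First, to establish that $S_{n-1}(c)$ genuinely behaves as a discrete derivative of $S_n$, I would start from~\eqref{eqn:BasicSequenceRelation}, solve it for $S_n(r)$, and shift the index $n \mapsto n-1$; this is a purely algebraic manipulation and produces the claimed expression of $S_{n-1}(c)$ as a one-step difference of $S_n$, modulo the sign/index convention the author has chosen.

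Second, to confirm that the slope at the center of an inflection is steeper than at the sides, I would simply read off the defining inequalities: in the positive case, $0 < S_{n-1}(a+1) < S_{n-1}(a+2) = \cdots = S_{n-1}(b-1)$ together with $S_{n-1}(b-1) > S_{n-1}(b) > 0$ says precisely that the plateau value of the ``discrete derivative'' strictly exceeds both flanking values, so under the identification from step one the magnitude of the slope is largest at the center. The negative case is symmetric. Third, to see that $S_n(n-1)$ cannot appear in a sign change, extreme, or inflection, I would inspect the admissible range of the locus $(a,b)$ in each of the three definitions: in every case the stated constraints force $b \leq n-2$, so the index $n-1$ never falls inside the relevant interval, and for inflections the conclusion is even cleaner because the definition references $S_{n-1}$ rather than $S_n$.

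The one portion I would \emph{not} attempt here is the parenthetical aside that ``no other inflection shape is necessary.'' The author explicitly defers it to the later property \textsf{NoBathtubs}, and that really is the nontrivial step: the main obstacle hidden in the observation is not any of its three stated claims but rather the forthcoming argument --- to be supplied later in the paper --- that rules out flat-bottomed or bathtub-shaped segments of $S_n$, without which the catalogued list of inflection shapes would a priori be incomplete.
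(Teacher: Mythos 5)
This statement is a parenthetical Observation, not a theorem; the paper supplies no proof, so the only sensible route is the one you take---direct verification from the definitions---and your verification is essentially sound. One precision is worth insisting on: solving \eqref{eqn:BasicSequenceRelation} for $S_n(r)$ and shifting $n\mapsto n-1$ actually yields $S_{n-1}(c)=S_n(c-1)-S_n(c)$, which is \emph{not} the identity $S_{n-1}(c)=S_n(c+1)-S_n(c)$ printed in the observation; equivalently, $S_n(c+1)-S_n(c)=-S_{n-1}(c+1)$. So the formula in the paper is off by both a sign and an index shift. Your hedge ``modulo the sign/index convention the author has chosen'' glosses over this; it would be better to state plainly that the printed identity is a typo, while noting that only the qualitative reading ($S_{n-1}$ encodes the one-step differences of $S_n$, up to sign) is actually invoked in the inflection definition, so nothing downstream breaks.

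Your remaining two checks are correct: in a positive inflection the plateau value of $S_{n-1}$ strictly dominates both flanking values, so under the discrete-derivative reading the slope magnitude of $S_n$ peaks at the center; and in each of the three definitions the locus constraint forces $b\le n-2$, so index $n-1$ never enters (for inflections the relevant sequence is $S_{n-1}$ anyway, which terminates at $n-2$). You are also right to leave the ``no other inflection shape is necessary'' aside alone---it is an explicit forward reference to property~\ref{NoBathtubs} and is established there, not here.
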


\begin{prop}
 \label{prop:Shape}
  For all $n \geq 6$ the sequence $S_n$ satisfies
  {\renewcommand{\labelenumi}{(Sh\Alph{enumi})}
  \renewcommand{\theenumi}{(Sh\Alph{enumi})}
  \begin{enumerate}
    \item \label{OneOfEach}
          There are exactly one sign change, one extreme, and one inflection.
    \item \label{NoBathtubs}
          A maximum must have positive value and a minimum must have negative
          value.
    \item \label{NoMesas}
          There is at most one $r$ such that $S_n(r) = 0$.
    \item \label{HemiboundingBand}
          At least one of the two values $\min_r S_n(r)$ and $\max_r S_n(r)$
          lies between $x a_n$ and $y a_n$.
    \item \label{SlopesAtEndpoints}
          If the extreme has locus $(a,b)$ and the inflection has locus
          $(c,d)$, then
          {\renewcommand{\labelenumii}{(\alph{enumii})}
           \renewcommand{\theenumii}{(\alph{enumii})}
          \begin{enumerate}
            \item \label{ShapeEa}
                  $
                    a \leq c \Longrightarrow
                    S_{n-1}(0) < S_{n-1}(1) < S_{n-1}(2) < 0
                  $,
            \item \label{ShapeEb}
                  $
                    a > c \Longrightarrow
                    0 > S_{n-1}(0) > S_{n-1}(1) > S_{n-1}(2)
                  $.
          \end{enumerate} }
  \end{enumerate} }
\end{prop}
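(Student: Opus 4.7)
The plan is induction on $n \geq 6$. For the base case, I would compute the values $S_6(0), \ldots, S_6(5)$ as polynomials in $x$ (with $y = 1+x$) using the recursion at the top of the section, and directly verify each of the five properties on the interval $x \in (-1,0)$. The arithmetic is routine but somewhat lengthy, since each property must be checked independently.

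For the inductive step, the organizing principle is the identity $S_{n-1}(r) = S_n(r-1) - S_n(r)$, which exhibits $-S_{n-1}$ as a discrete derivative of $S_n$. Under this viewpoint, features of $S_n$ correspond to features one order lower in $S_{n-1}$: extremes of $S_n$ arise exactly at sign changes of $-S_{n-1}$, and inflections of $S_n$ arise at extremes of $S_{n-1}$ (the extra positivity/negativity hypothesis in the inflection definition is supplied by \textbf{(Sh B)} applied one level down, combined with \textbf{(Sh A)} to ensure the neighboring values sit on the same side of $S_{n-1}$'s unique sign change). The inductive hypothesis therefore hands over one extreme and one inflection of $S_n$ immediately. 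The sign change count of $S_n$ is then forced by the boundary values $S_n(0) = y\,a_{n-1}$ and $S_n(n-1) = x\,a_{n-1}$, which have opposite signs because $xy < 0$: since the discrete derivative changes sign only once, $S_n$ is monotonic on each side of its unique extreme, so it crosses zero at most twice overall, and a second crossing would violate \textbf{(Sh B)} (verified concurrently). This settles \textbf{(Sh A)}, and properties \textbf{(Sh B)} and \textbf{(Sh C)} follow from the same one-extreme, monotonic-on-each-side picture combined with the boundary signs.

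For \textbf{(Sh D)} I would exploit the identity $a_n = \sum_{j=1}^{n-1} S_n(j)$. Assuming for contradiction that both $\max_r S_n(r) > y\,a_n$ and $\min_r S_n(r) < x\,a_n$ (with signs adjusted if $a_n<0$), the one-extreme shape forces the majority of terms in the sum to share sign with the extreme; a careful accounting of those terms is incompatible with the sum equaling $a_n$. Property \textbf{(Sh E)} is the most delicate. Applying the recursion $S_{n-1}(r) = S_{n-1}(r-1) - S_{n-2}(r)$ expresses $S_{n-1}(0), S_{n-1}(1), S_{n-1}(2)$ in terms of $a_{n-2}$ and the first few values of $S_{n-2}$. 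The case split $a \leq c$ versus $a > c$ in the statement tracks whether the unique sign change of $S_{n-1}$ precedes or follows its unique extreme; by the discrete-derivative correspondence one level further down, this in turn amounts to whether the extreme of $S_{n-2}$ precedes or follows its inflection. The desired inequalities on $S_{n-1}(0), S_{n-1}(1), S_{n-1}(2)$ can then be read off from \textbf{(Sh E)} for $S_{n-2}$ via the inductive hypothesis at that level.

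The main obstacle will be a case analysis driven by (i) the sign of $a_{n-1}$, which fixes whether $S_n$ starts positive or negative, and hence whether its unique extreme is a maximum or a minimum; and (ii) the relative locations of the sign change, extreme, and inflection loci of $S_{n-1}$. I expect \textbf{(Sh E)} to be the technical crux, since it requires simultaneously controlling the shape of $S_{n-1}$ and the first few values of $S_{n-2}$, while ensuring that the ordering propagates consistently with the hemibound \textbf{(Sh D)} throughout the induction.
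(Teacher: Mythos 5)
Your overall framework -- simultaneous induction, the relation $S_{n-1}(r)=S_n(r-1)-S_n(r)$ viewed as a discrete derivative, and the boundary values $S_n(0)=ya_{n-1}$, $S_n(n-1)=xa_{n-1}$ having opposite signs -- is indeed the skeleton the paper uses, and your plan for \textbf{(Sh A)}--\textbf{(Sh C)} is essentially the paper's: one sign change of $S_{n-1}$ forces one extreme of $S_n$ (hence one inflection one level up), and the boundary signs plus monotonicity force the sign-change count. (The paper's \textbf{(Sh A)} argument is actually slightly different in flavor -- it runs a descending cascade ``three sign changes in $S_n$ gives two extremes, hence two sign changes in $S_{n-1}$, ..., down to $S_4$ which can have only one'' -- and does not invoke \textbf{(Sh B)} at all, so your parenthetical ``verified concurrently'' is an unnecessary complication that risks circularity.) Your plan for \textbf{(Sh B)} omits the crucial case split on the sign of $S_n(1)$: the paper shows that if $S_n(1)>0$ while $S_n(0)<0$, then $S_{n-1}(1)<0$, so $S_{n-1}$ cannot have a down-change and $S_n$ therefore has no minimum at all -- this is not the ``monotone on each side plus boundary signs'' picture you describe.

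The real gaps are in \textbf{(Sh D)} and \textbf{(Sh E)}. For \textbf{(Sh D)}, the contradiction you sketch (``a careful accounting of those terms is incompatible with the sum equaling $a_n$'') has no mechanism: having both $\max_r S_n(r)$ above $ya_{n-1}$ and $\min_r S_n(r)$ below $xa_{n-1}$ is not in tension with $a_n=\sum_j S_n(j)$ in any obvious way, and you give no accounting that would make it so. The paper's actual argument sidesteps the sum identity entirely: it inspects the sign of $S_{n-1}(1)$, uses the one-sign-change property of $S_{n-1}$ to rule out a down-change (hence an interior minimum of $S_n$), and then the min of $S_n$ over $r\geq 1$ must be attained at an endpoint, where the boundary values place it in the band. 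For \textbf{(Sh E)}, descending to $S_{n-2}$ and invoking \textbf{(Sh E)} at level $n-1$ does not close: \textbf{(Sh E)} for $S_{n-1}$ gives inequalities about $S_{n-2}(0),S_{n-2}(1),S_{n-2}(2)$, which (after differencing) at best give you the \emph{ordering} of $S_{n-1}(0),S_{n-1}(1),S_{n-1}(2)$ but not the sign condition ``$<0$'' (resp.~``$>0$''); you would still need an absolute bound. The paper instead uses \textbf{(Sh D)} at level $n-1$: the case split $a\leq c$ vs.\ $a>c$ determines whether the up-change of $S_{n-1}$ precedes or follows its unique extreme, hence whether that extreme is a maximum or a minimum, and then \textbf{(Sh D)} for $S_{n-1}$ supplies exactly the needed pin-down of $S_{n-1}(0),S_{n-1}(1),S_{n-1}(2)$ relative to zero. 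You would need to discover this \textbf{(Sh D)}$\to$\textbf{(Sh E)} link (rather than \textbf{(Sh E)}$\to$\textbf{(Sh E)}) for the induction to work.
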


%\snlong{3.5}{Some inequalities should not be strict in order to cover all
%             cases. Find out where that can be done.}

\begin{figure}[h]\refstepcounter{figure}\addtocounter{figure}{-1}
 \label{fig:ShapeIllustration}
  \begin{center}
    \includegraphics{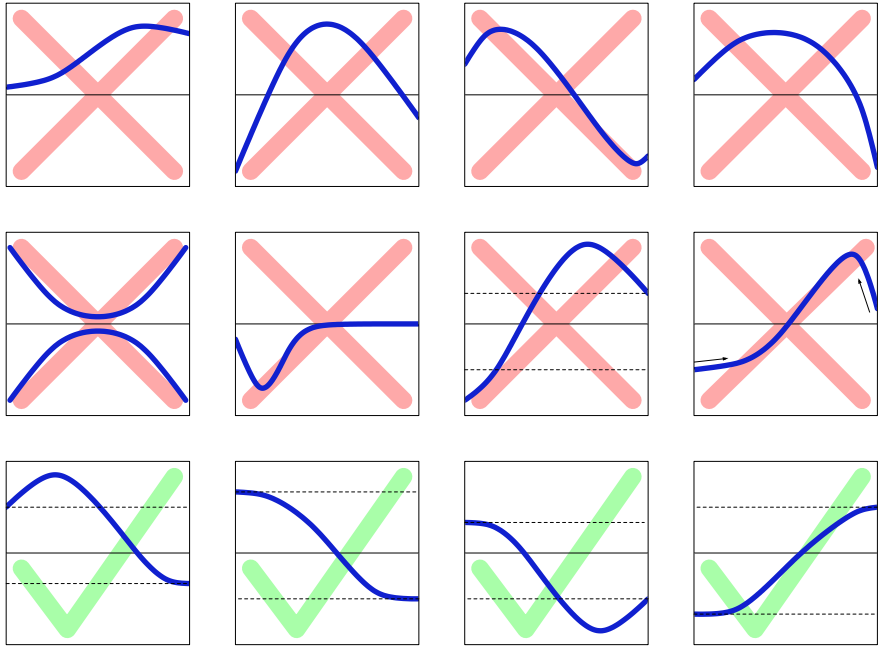}
    \caption{The first row of pictures illustrates property \ref{OneOfEach} of
             Proposition~\ref{prop:Shape}. (1): at least one zero. (2): at
             most one zero. (3): at most one extreme. (4): at least one
             inflection. The second row illustrates properties
             \ref{NoBathtubs}, \ref{NoMesas}, \ref{HemiboundingBand}, and
             \ref{SlopesAtEndpoints}.}
  \end{center}
\end{figure}

\begin{proof}[Proof of \ref{OneOfEach}]
A sign change in $S_n$ implies an extreme in $S_{n+1}$, which in turn
implies an inflection in $S_{n+2}$; therefore we only need to prove that $S_n$
has exactly one sign change.

For $n \geq 4$, $S_n(0) = y a_{n-1}$ and $S_n(n-2) = x a_{n-1}$. Since $x$ and
$y$ have opposite signs, the sequence $S_n$ has at least one sign change. If
there were more changes than one, there would be at least three because
$\text{sgn}\big( S_n(0) \big) \neq \text{sgn}\big( S_n(n-2) \big)$. Then $S_n$
has two extremes, and thus $S_{n-1}$ has two sign changes. But $S_4(2) =
S_4(3)$, so $S_4$ can have at most one sign change, so by induction, $S_n$ has
exactly one sign change.
%  The proof is by induction on $n$. Note first that $S_2, S_3$ cannot have a
%  change of sign because the sequences are too short. For the same reason,
%  $S_2,S_3, S_4$ cannot have an extreme, and $S_2,S_3,S_4, S_5$ cannot have
%  an inflection.
%
%  The values $S_4(2)$ and $S_4(3)$ are equal, so $S_4$ can have at most one
%  sign change. Moreover, these values determine the change from $S_5(1)$ to
%  $S_5(2)$ and from $S_5(2)$ to $S_5(3)$. Since $S_5(3) = S_5(4)$, the
%  sequence $S_5$ is monotone and therefore has at most one sign change and no
%  extreme.
%
%  Now assume \ref{OneOfEach} is true up to $n-1 \geq 5$. Then a change of
%  sign in $S_{n-1}$ is equivalent to an extreme in $S_n$, and an extreme in
%  $S_{n-1}$ is equivalent to an inflection in $S_n$. Therefore we only need
%  to show that $S_n$ has at most one sign change.
%
%  Assume that $S_n(n-1) > 0$ (the negative case is analogous).
%  {\renewcommand{\labelenumi}{\roman{enumi})}
%  \begin{enumerate}
%    \item If $S_n(1) < 0$, two sign changes would require a maximum and
%         a minimum; therefore $S_n$ must have exactly one sign change.
%    \item If $S_n(1) \geq 0$, note that $S_n(0) = y a_{n-1} < 0$ because
%         $x a_{n-1} = S_n(n-1) > 0$. By \eqref{eqn:BasicSequenceRelation},
%          we see that $S_{n-1}(1) < 0$. Since $S_{n-1}$ has at most one sign
%          change, $S_n$ must increase monotonously or have a maximum. In either
%          case, it cannot have negative values, and therefore no sign changes.
%          \qedhere
%  \end{enumerate} }
\end{proof}

\begin{proof}[Proof of \ref{NoBathtubs}]
  Assume $S_n(n-1) > 0$ (the negative case is analogous). In particular,
  a maximum of $S_n$ must lie above $S_n(n-1)$ and thus be positive.
  Also, $S_n(0) = y a_{n-1} < 0$ because $S_n(n-1) = x a_{n-1}$. 
  {\renewcommand{\labelitemi}{$\circ$}
  \begin{itemize}
    \item If $S_n(1) < 0$, then a minimum must lie lower than
          $S_n(1)$ and thus be negative.
    \item If $S_n(1) > 0$, then there is an increase from $S_n(0)$,
          so $S_{n-1}(1) < 0$. Thus $S_{n-1}$ cannot have a down-change, and
          therefore $S_n$ cannot have a minimum. \qedhere
  \end{itemize} }
\end{proof}

\begin{proof}[Proof of \ref{NoMesas}]
  Since $S_n$ has at most one sign change, we need only discard the possibility
  of two (or more) consecutive zeros. Accordingly, assume that $S_n(r) =
  \ldots = S_n(r+k) = 0$ ($k \geq 1$) with $S_n(r-1) < 0$ and $S_n(r+k+1) \neq
  0$ (the case $S_n(r-1) > 0$ is analogous).

  Now, $S_n(r+k+1) < 0$ contradicts \ref{NoBathtubs}, so we can assume
  $S_n(r+k+1) > 0$. But then
  \[S_{n-1}(r) < 0 = S_{n-1}(r+1) = \ldots = S_{n-1}(r+k) > S_{n-1}(r+k+1),\]
  and this means that $S_{n-1}$ contradicts \ref{NoBathtubs}. \qedhere
\end{proof}

\begin{proof}[Proof of \ref{HemiboundingBand}]
  Assume $S_{n-1}(1) < 0$ so $S_n(1) > y a_{n-1}$ (the case $S_{n-1}(1) > 0$
  is analogous). Moreover, $S_{n-1}$ cannot have a down-change (because it
  starts with a negative value); therefore $S_n$ cannot have a minimum. Since
  $S_n$ starts above $y a_{n-1}$ and ends at $x a_{n-1}$, the conclusion
  follows. \qedhere
\end{proof}

\begin{proof}[Proof of \ref{SlopesAtEndpoints}]
  We can assume that the extreme of $S_n$ is a maximum (the minimum case is
  analogous). Then $S_{n-1}$ has an up-change at $(a+1,b)$, and an extreme at
  $(c+1,d)$.

  {\renewcommand{\labelenumi}{(\alph{enumi})}
  \begin{enumerate}
    \item If $a \leq c$, $S_{n-1}$ has an increase (namely the up-change)
          before its extreme. Hence the extreme is a maximum which,
          therefore, lies above $x a_{n-2}$. By \ref{HemiboundingBand},
          \[S_{n-1}(0) = y a_{n-2} \leq S_{n-1}(1) \leq S_{n-1}(2) < 0.\]
    \item If $a > c$, $S_{n-1}$ has an increase (namely the up-change) after
          its extreme. Hence the extreme is a minimum which, therefore, lies
          below $y a_{n-2}$. In particular, $S_{n-1}$ is decreasing until this
          minimum. We claim that it is decreasing starting at the auxiliary
          term, i.e., that $S_{n-1}(0) \geq S_{n-1}(1)$; otherwise, $S_{n-2}$
          has two sign changes. Then we have
          \[
            0 > y a_n = S_{n-1}(0) \geq S_{n-1}(1) \geq S_{n-1}(2).
            \qedhere
          \]
  \end{enumerate} }
\end{proof}

\subsection{{\boldmath$S_n$} becomes a step function}\label{subsect:Functional}
Formula~\eqref{eqn:PatternRecursion} induces a linear operator $\V{A}_n:
\R^{n-1} \longrightarrow \R^n$. Here we will embed $\V{A}_n$ as an integral
operator $A_n: \L \longrightarrow \L$, and find an operator $T$ which is the
limit of $\{ A_n \}$ in the operator norm. The goal will be to link the growth
of $\{a_n \}$ to the spectral properties of $T$. \\

For $n \geq 2$, the $r^{\rm th}$ entry of the column vector
\[
 \V{s}_n =
  \( [
    \begin{tabular}{c}
      $S_n(1) / (n-2)!$ \\
      $\vdots$ \\
      $S_n(n-1) / (n-2)!$
    \end{tabular}
  \) ] \in
  \R^{n-1}
\]
represents the average contribution to $a_n$ of patterns with last entry
$t_{n-1} = r$ (of which there are $(n-2)!$). With this notation,
Equation~\eqref{eqn:PatternRecursion} can be interpreted as a linear
transformation
\begin{equation}
 \label{eqn:AveragedPatternRecursion}
  \V{s}_{n+1} = (\V{A}_n \cdot \V{s}_n)/(n-1),
\end{equation}
where $\V{A}_n$ is the $n \times (n-1)$ matrix whose $(i,j)$-entry is $x$ if
$i \geq j$, and $y$ otherwise.

Let $E_n: \R^{n-1} \longrightarrow \L$ be the linear map that sends the
standard basis vector $\V{e}_j$ to the characteristic function of the interval
$\( [ \frac{j-1}{n-1}, \frac{j}{n-1} \) )$. The vector $\V{s}_n$ maps to
the step function $s_n = E_n(\V{s}_n)$, such that $s_n (u) = S_n(j)/(n-2)!$
whenever $u \in \( [\frac{j-1}{n-1}, \frac{j}{n-1} \) )$. The maps $\{ E_n \}$
embed the linear operators $\V{A}_n: \R^{n-1} \longrightarrow \R^n$ into
linear operators $A_n: \L \longrightarrow \L$. In particular,
Equation~\eqref{eqn:AveragedPatternRecursion} takes the form
\begin{equation}
 \label{eqn:IntegralPatternRecursion}
  s_{n+1}(u) =
  \( [ A_n (s_n) \) ](u) =
  \integral{0}{1}{\alpha_n(u,v) \cdot s_n(v)}{v},
\end{equation}
where the kernel $\alpha_n$ is a piecewise constant function whose value at
$(u,v) \in \( [ \frac{i-1}{n}, \frac{i}{n} \) ) \cross \( [ \frac{j-1}{n-1},
\frac{j}{n-1} \) )$ is
\[
  \alpha_n (u,v) =
  \begin{cases}
    x & \text{ if } i \geq j \\
    y & \text{ otherwise}
  \end{cases}
  \qquad
  \big(\text{i.e., equal to } (\V{A}_n)_{ij} \big).
\]

\begin{obs}
  The factor $1/(n-1)$ in \eqref{eqn:AveragedPatternRecursion} is hidden as a
  normalization factor in \eqref{eqn:IntegralPatternRecursion}. Indeed, when
  $u$ lies in the interval $\( [ \frac{i-1}{n}, \frac{i}{n} \) )$,
  \begin{align*}
    \( [ A_n (s_n) \) ](u) =
    & \integral{0}{1}{\alpha_n(u,v) \cdot E_n(\V{s}_n)(v)}{v} = \\
    & \sum_{j=1}^{n-1} \integral{\( [ \frac{j-1}{n-1}, \frac{j}{n-1} \) )}{}
        {(\V{A}_n)_{ij} \frac{S_n(j)}{(n-2)!}}{v} = \\
    &  \sum_{j=1}^{n-1} (\V{A}_n)_{ij} \frac{S_n(j)}{(n-2)!} \cdot
         \frac{1}{n-1},
  \end{align*}
  which is the $i^{\rm th}$ entry of $\V{s}_{n+1}$. In particular, (compare
  equation~\eqref{eqn:Recover-a_n}):
  \[
    a_n =
    (n-1)! \integral{0}{1}{s_n(v)}{v}.
  \]
\end{obs}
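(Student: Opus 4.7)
The plan is to verify both claims in the observation by direct computation, exploiting the fact that $s_n$ is the step function of height $S_n(j)/(n-2)!$ on the interval $[\frac{j-1}{n-1},\frac{j}{n-1})$ of width $\frac{1}{n-1}$.

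First I would fix $u$ in $[\frac{i-1}{n},\frac{i}{n})$ and evaluate $[A_n(s_n)](u)=\int_0^1 \alpha_n(u,v)\,s_n(v)\,dv$. On each rectangle $[\frac{i-1}{n},\frac{i}{n})\times[\frac{j-1}{n-1},\frac{j}{n-1})$ the kernel $\alpha_n$ is constant equal to $(\V{A}_n)_{ij}$ by its definition, while $s_n$ is constant equal to $S_n(j)/(n-2)!$; the integral therefore collapses to the finite sum $\sum_{j=1}^{n-1}(\V{A}_n)_{ij}\,S_n(j)/((n-2)!\,(n-1))$. Reading off the explicit entries of $\V{A}_n$, the factor $\sum_j (\V{A}_n)_{ij}\,S_n(j) = x\sum_{j\le i}S_n(j)+y\sum_{j>i}S_n(j)$ equals $S_{n+1}(i)$ by \eqref{eqn:PatternRecursion}, so $[A_n(s_n)](u)=S_{n+1}(i)/(n-1)!$, which is precisely the $i^{\rm th}$ entry of $\V{s}_{n+1}$. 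The width $\frac{1}{n-1}$ of the source partition has absorbed the explicit $1/(n-1)$ factor of \eqref{eqn:AveragedPatternRecursion}, establishing the first claim.

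For the identity $a_n=(n-1)!\int_0^1 s_n(v)\,dv$, the same step decomposition yields $\int_0^1 s_n(v)\,dv = \sum_{j=1}^{n-1}(S_n(j)/(n-2)!)\cdot\frac{1}{n-1}=\frac{1}{(n-1)!}\sum_{j=1}^{n-1}S_n(j) = a_n/(n-1)!$, where the last equality is \eqref{eqn:Recover-a_n}; multiplying through gives the formula.

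The only delicate bookkeeping occurs at the upper index $i=n$: the target partition has $n$ pieces while the source has $n-1$, and \eqref{eqn:PatternRecursion} is defined a priori only for $r\in\{1,\dots,n-1\}$. Both issues are resolved by the convention $S_{n+1}(n-1)=S_{n+1}(n)$ stated immediately after \eqref{eqn:PatternRecursion}, which is consistent with the observation that the $i=n$ and $i=n-1$ rows of $\V{A}_n$ coincide. Beyond this, I expect no real obstacle: the entire observation is a routine translation from a discrete matrix--vector product to a piecewise-constant integral kernel, provided the normalizations are tracked carefully.
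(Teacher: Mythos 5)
Your computation matches the paper's own verification step for step: you expand the integral over the rectangles on which $\alpha_n$ and $s_n$ are simultaneously constant, collect the width factor $\frac{1}{n-1}$, identify the resulting sum with $S_{n+1}(i)/(n-1)!$ via \eqref{eqn:PatternRecursion}, and derive the $a_n$ formula from \eqref{eqn:Recover-a_n} in the same way. The extra remark about the boundary index $i=n$ and the convention $S_{n+1}(n-1)=S_{n+1}(n)$ is correct bookkeeping but does not change the argument.
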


To prove the theorem we need to show that the rate of exponential decay of the
integrals $\integral{0}{1}{s_n(v)}{v}$ is bounded from below. The bound will
be dictated by the largest eigenvalue $\l$ of the limit operator of $A_n$.

\subsection{The limit operator {\boldmath$T$}}
Here we define the limit operator $T$ of the sequence $\{ A_n \}$, and
establish some of its basic properties.

Let $T: \L \longrightarrow \L$ by
\[
  (Tf)(u) =
  \integral{0}{1}{\kappa(u,v) \cdot f(v)}{v},
\]
with kernel
\[
  \kappa(u,v) =
  \begin{cases}
    x & \text{ if } u \geq v \\
    y & \text{ otherwise}.
  \end{cases}
\]

\begin{lemma2}
 \label{lemma:OperatorNormEstimate}
  The operator $T$ is the limit of $\{ A_n \}$ in the operator norm:
  \[\( \| T - A_n \) \| \leq \frac{1}{\sqrt{n}}.\]
\end{lemma2}

\begin{figure}[h]\refstepcounter{figure}\addtocounter{figure}{-1}
 \label{fig:Kernels}
  \begin{center}
    \includegraphics{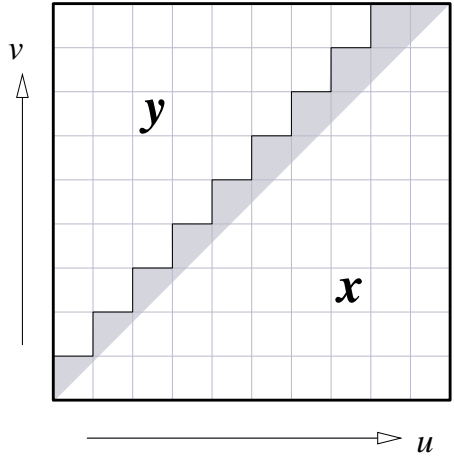}
    \caption{The kernel $\alpha_{10}$ (the shaded region is
             $\Omega_{10}$). Note that the unit square is divided into
             rectangles of size $\frac{1}{10}$ by $\frac{1}{9}$.}
  \end{center}
\end{figure}

\begin{proof}
  The kernel of $T - A_n$ is the function $\kappa - \alpha_n$. Since $y$
  is just a shorthand for $1+x$, we see that $\kappa - \alpha_n$ is the
  characteristic function of the staircase region $\Omega_n$ in the unit
  square, consisting of the upper triangle $\{ 0 \leq u,v \leq 1\; |\; u < v
  \}$, minus those rectangles $\( [ \frac{i-1}{n}, \frac{i}{n} \) ) \times \(
  [ \frac{j-1}{n-1}, \frac{j}{n-1} \) )$ such that $i < j$. Then the Lebesgue
  measure of $\Omega_n$ is
  \[
    \mu(\Omega_n) =
    \frac{1}{2} - \frac{(n-2)(n-1)}{2} \cdot \frac{1}{n(n-1)} =
    \frac{1}{n}.
  \]

  It follows that
  \[
    \( \| T - A_n \) \|_{L^2} \leq
    \big( \mu(\Omega_n) \big)^{1/2} =
    \frac{1}{\sqrt{n}}.
    \qedhere
  \]
\end{proof}

%In fact(???), since $\Omega_n \subset \Delta$, we can get the better estimate
%\[
%  \| T \| - \frac{1}{\sqrt{n}} \leq
% \| \Hat{A}_n \| \leq
%  \| T \|.
%\]
%
%Can this proof yield results on the spectrum?

\subsection{Eigenfunctions of {\boldmath$T$}}
\label{subsect:Eigenfunctions}
The operator $T$ can be expressed as follows:
\begin{align*}
  (Tf)(u) =&
  \integral{0}{1}{\kappa(u,v) \cdot f(v)}{v} = \\
  & x \cdot \integral{0}{u}{f(v)}{v} + y \cdot \integral{u}{1}{f(v)}{v} = \\
  & -F(u) + y \cdot F(1) - x \cdot F(0),
\end{align*}
where $F$ is any primitive of $f$. To find the eigenvalues of $T$, set
\begin{align}
 \label{eqn:Eigenequation}
  (\l f)(u) =&
  (Tf)(u) = \\
  & -F(u) + y \cdot F(1) - x \cdot F(0), \notag
\end{align}
and differentiate to obtain the ODE
\[f'(u) = \frac{-f(u)}{\l},\]
with general solution
\[
  f(u) = C {\rm e}^{-u/\l}.
\]

A primitive of $f$ is $F(u) = -\l C {\rm e}^{-u/\l}$, so
substituting in \eqref{eqn:Eigenequation} gives
\[
  \l \big( C {\rm e}^{-1/\l} \big) =
  \l C {\rm e}^{-u/\l} -
    \l C \( ( y \cdot {\rm e}^{-1/\l} - x \) ).
\]

Thus, $\l$ is an eigenvalue if and only if
\[
  {\rm e}^{-1/\l} = \frac{x}{y}.
\]
Note that $\frac{x}{y} < 0$ exactly when $x \in (-1,0)$. Then we can write the
eigenvalues as
\[
  \l_m =
  \frac{-1}{\log \frac{x}{y} + 2 m \pi {\rm i}} =
  \frac{-1}{\log \big| \frac{x}{y} \big| + (2 m + 1) \pi {\rm i}}
  \text{ for all }m \in \Z,
\]
and the eigenfunction corresponding to $\l_m$ is
\[
  f_m(u) =
  \big| \tfrac{x}{y} \big|^u {\rm e}^{(2 m + 1) \pi {\rm i} u}.
\]

\begin{defn}
  For ease of notation, we write the absolute values of the two largest
  eigenvalues as $\l := |\l_{-1}| = |\l_0|$ and $\mu := |\l_{-2}| = |\l_1|$.
\end{defn}

\begin{lemma2}
 \label{lemma:OrthonormalBasis}
  The family of functions $\{ f_m  \}_{m \in \Z}$  forms a basis of $\L$.
\end{lemma2}

\begin{proof}
  Let $f$ be an arbitrary function in $\L$. Since $x \in (-1,0)$, the function
  $g(u) = \big| \frac{y}{x} \big|^u {\rm e}^{-\pi {\rm i} u}$ is continuous,
  so $fg \in \L$. After rescaling the standard basis of $L^2[-\pi, \pi]$, we
  obtain the representation
  \[
    (fg) (u) =
    \sum_{m \in \Z} c_m \cdot {\rm e}^{2 m \pi {\rm i} u},
  \]
  which implies
  \[
    f(u) =
    \sum_{m \in \Z} c_m \cdot
      \big| \tfrac{x}{y} \big|^u {\rm e}^{(2 m + 1) \pi {\rm i} u} =
    \sum_{m \in \Z} c_m \cdot f_m(u).
  \]
  The reverse argument shows that $\{ f_m \}$ are linearly independent.
\end{proof}

In order to turn $\{ f_m \}$ into an orthonormal basis, we introduce the {\em
weighted} inner product
\begin{equation}
 \label{eqn:InnerProduct}
  \langle f,g \rangle =
  \integral{0}{1}
    {\big| \tfrac{x}{y} \big|^{-2v} f(v) \overline{g}(v)}{v}.
\end{equation}

Note that for $m \geq 0$ the pair of functions $f_{-(m+1)}, f_m$ are complex
conjugate and their eigenvalues have the same magnitude. As a consequence, a
convenient basis for the subspace $\LR \subset \L$ of {\em real-valued}
functions is
\[
  \( \{
     \big| \tfrac{x}{y} \big|^u \cos \big( (2m+1) \pi u \big),
     \big| \tfrac{x}{y} \big|^u \sin \big( (2m+1) \pi u \big)
  \) \}_{m \geq 0}.
\]

\section{The functional approach}\label{sect:FunctionalApproach}
In this section we establish the lower bound on the exponential rate of decay
of the sequence $a_n/(n-2)!$ The long proof of
Lemma~\ref{lemma:Norms1AndInfty} interferes with the flow of logic, and is
consequently deferred to the next section. \\

\begin{defn}
  The eigenfunctions $f_{-1}$ and $f_0$ with largest eigenvalue $\l$ span a
  complex two-dimensional subspace of $\L$. Let $E \subset \LR$ denote the
  real slice of this subspace generated by $\( \{ \big| \tfrac{x}{y} \big|^u
  \cos \big( \pi u \big),\, \big| \tfrac{x}{y} \big|^u \sin \big( \pi u \big)
  \) \}$. The space $E^{\perp}$ spanned by all other eigenfunctions is
  orthogonal to $E$, so that (by Lemma~\ref{lemma:OrthonormalBasis}) $\LR = E
  \oplus E^{\perp}$. The projections onto $E$ and $E^{\perp}$ are denoted $P$
  and $P^{\perp}$ respectively. By Parseval's Theorem we can define the angle
  $\t_n$ by any of the three equivalent formulas
  \[
    \sin\t_n := \frac{\| P^{\perp} s_n \|_2}{\| s_n \|_2},\quad
    \cos\t_n := \frac{\| P s_n \|_2}{\| s_n \|_2},\quad
    \tan\t_n := \frac{\| P^{\perp} s_n \|_2}{\| P s_n \|_2}.
  \]
\end{defn}
Intuitively, the closer $\t_n$ is to $0$, the better $s_n$ resembles a
function in $E$. \\

Now we can describe the strategy of the proof:
{\renewcommand{\theenumi}{\arabic{enumi}}
\renewcommand{\labelenumi}{\underline{Step \arabic{enumi}}:}
\begin{enumerate}
  \item \label{ThetaBoundedAway}
        We use the shape properties of the sequence $S_n$ to show that the
        angles $\t_n$ are bounded away from $\pi/2$.
  \item \label{ThetaGoesTo0}
        The sequence $\{ \t_n \}$ converges to $0$, so the functions $s_n$
        become progressively sinusoidal.
  \item \label{ExponentialDecrease} There is a sequence of indices $\{ n_k \}$
        such that $\{ |a_{n_k}| \}$ is comparable to $\{ \|s_{n_k}\|_2
        \}$. Meanwhile, $\|s_n\|_2 \geq (\l-\e)^n$ for arbitrarily small $\e$,
        and the result will follow.
\end{enumerate}}

\subsection{Step \ref{ThetaBoundedAway} {\boldmath$(\t_n \leq \Theta <
            \pi/2)$}}
Fix $n$ and consider the locus $(a,b)$ of the sign change of $S_n$. The value
$z_n := a/(n-1) \in [0,1)$ is such that $s_n(u) \cdot \sin(\pi (u - z_n))$
never changes sign. We will show there is a $K>0$ such that for all $n$,
\begin{equation}
 \label{eqn:ProjectionAndNorm2}
  \big|
      \langle s_n(u), \big| \tfrac{x}{y} \big|^u \sin\pi(u - z_n \rangle
  \big| >
  K \|s_n\|_2.
\end{equation}
The projection $P s_n$ is larger than $|\langle s_n(u), \big|
\tfrac{x}{y} \big|^u \sin\pi(u - z_n \rangle|$, and thus the angle $\t_n$ is
bounded away from $\pi/2$ by
\[\t_n < \arccos K =: \Theta < \pi/2.\]

The proof of \eqref{eqn:ProjectionAndNorm2} follows from
Corollary~\ref{corol:Norms1And2} and
Lemma~\ref{lemma:PseudoProjectionAndNorm1} below, using $K = CC'$.

\begin{lemma2}
 \label{lemma:Norms1AndInfty}
  The sequences $\{ \|s_n\|_1 \}$ and $\{ \|s_n\|_{\infty} \}$ are comparable
  in the sense that there is a constant $C$ such that for all $n$,
  \[\|s_n\|_{\infty} \geq \|s_n\|_1 \geq C \|s_n\|_{\infty}.\]
\end{lemma2}

This is the main technical lemma, and its proof is deferred to
Section~\ref{sect:Proof-Norms1AndInfty}.

\begin{corol}
 \label{corol:Norms1And2}
  The sequences $\{ \|s_n\|_1 \}$ and $\{ \|s_n\|_2 \}$ are comparable in the
  sense that there is a constant $C$ such that for all $n$,
  \[\|s_n\|_2 \geq \|s_n\|_1 \geq C \|s_n\|_2.\]
\end{corol}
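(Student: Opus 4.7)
The corollary is a short interpolation-style consequence of Lemma~\ref{lemma:Norms1AndInfty}, so the plan is essentially routine. The key point is that $[0,1]$ has finite (in fact, unit) Lebesgue measure, which makes the $L^1$, $L^2$, $L^\infty$ norms comparable in one direction for free via Cauchy--Schwarz, and comparable in the other direction provided we have control on $\|s_n\|_\infty$.

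First I would handle the easy inequality $\|s_n\|_2 \geq \|s_n\|_1$. Writing $\|s_n\|_1 = \int_0^1 |s_n(v)| \cdot 1\, dv$ and applying the Cauchy--Schwarz inequality on $[0,1]$ gives $\|s_n\|_1 \leq \|s_n\|_2 \cdot \|1\|_2 = \|s_n\|_2$, since the ambient measure space has total mass one.

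For the reverse inequality I would use the standard $L^2 \leq L^1 \cdot L^\infty$ interpolation: since $|s_n(v)|^2 \leq \|s_n\|_\infty \cdot |s_n(v)|$ pointwise, integrating yields
\[
\|s_n\|_2^2 \;\leq\; \|s_n\|_\infty \cdot \|s_n\|_1.
\]
Now Lemma~\ref{lemma:Norms1AndInfty} supplies the bound $\|s_n\|_\infty \leq \|s_n\|_1 / C$, so the previous inequality becomes $\|s_n\|_2^2 \leq \|s_n\|_1^2 / C$, i.e.\ $\|s_n\|_1 \geq \sqrt{C}\,\|s_n\|_2$. Taking $C' := \sqrt{C}$ (or any smaller positive constant) as the constant in the statement of the corollary completes the proof.

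There is no real obstacle here: once Lemma~\ref{lemma:Norms1AndInfty} is in hand the corollary follows from two one-line facts. The only thing to be mildly careful about is that the constants in the corollary and in the lemma are not literally the same, which the statement already accommodates by re-using the letter $C$ generically.
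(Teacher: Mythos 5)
Your proof is correct and follows essentially the same route as the paper: Cauchy--Schwarz (with the constant function $1$) gives $\|s_n\|_1 \leq \|s_n\|_2$, and the reverse direction comes from combining Lemma~\ref{lemma:Norms1AndInfty} with a standard comparison of norms on the unit interval. The only cosmetic difference is that the paper bounds $\|s_n\|_2 \leq \|s_n\|_\infty$ directly and so keeps the constant $C$, whereas you go through the interpolation inequality $\|s_n\|_2^2 \leq \|s_n\|_\infty\,\|s_n\|_1$ and arrive at $\sqrt{C}$, which is equally valid (and, since $C \leq 1$, even slightly better).
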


\begin{proof}
  The left side is the Cauchy-Schwarz inequality. On the right we have
  \[
    \|s_n\|_1^2 \geq
    C^2 \|s_n\|_{\infty}^2 \geq
    C^2 \integral{0}{1}{s_n^2(v)}{v} =
    C^2 \|s_n\|_2^2.
    \qedhere
  \]
\end{proof}

\begin{lemma2}
 \label{lemma:PseudoProjectionAndNorm1}
  Let $z_n$ be defined as above. Then there is a constant $C'$ such that
\[
  \big|
       \langle s_n(u), \big| \tfrac{x}{y} \big|^u \sin\pi(u - z_n) \rangle
  \big| >
  C' \|s_n\|_1.
\]
\end{lemma2}

\begin{proof}
  Given $z \in [0,1]$, the maximum of the function $|\tfrac{x}{y}|^{-u} \cdot
  |\sin\pi(u-z)|$ in a small interval $[z-\e,z+\e] \cap [0,1]$ is $M_{z,\e} >
  0$; a quantity that varies continuously. Fix $\e \equiv \e(z)$ so that the
  Lebesgue measure of the set
  \[
    L_z :=
    \{
      u \in [0,1] \text{ s.t. }
      |\tfrac{x}{y}|^{-u} \cdot |\sin\pi(u-z)| < M_{z,\e}
    \}
  \]
  is $C/2$, where $C$ is the constant of Lemma~\ref{lemma:Norms1AndInfty}.
  This is well defined because $|\tfrac{x}{y}|^{-u} \cdot |\sin\pi(u-z)|$ is
  nowhere constant, so the measure of $L_z$ varies continuously. Since $[0,1]$
  is compact, the lower bound $M := \inf_{z \in [0,1]} M_{z,\e}$ is positive.

  Recall that $z_n$ is chosen so that $s_n(u) \cdot \sin\pi(u-z_n)$ has
  constant sign. For simplicity, let us assume that this sign is
  positive. Then the weighted inner product $\big| \langle s_n(u), \big|
  \tfrac{x}{y} \big|^u\sin\pi(u-z_n) \rangle \big|$ equals (compare
  \eqref{eqn:InnerProduct}):
  \begin{gather*}
    \integral{0}{1}{|\tfrac{x}{y}|^{-v}s_n(v) \cdot \sin\pi(v-z_n)}{v} \geq
    \integral{L_{z_n}^{\complement}}{}
             {|\tfrac{x}{y}|^{-v}s_n(v) \cdot \sin\pi(v-z_n)}{v} > \\
    M \integral{L_{z_n}^{\complement}}{}{|s_n(v)|}{v} =
    M \( (
          \integral{0}{1}{|s_n(v)|}{v} - \integral{L_{z_n}}{}{|s_n(v)|}{v}
      \) ).
  \end{gather*}
  But
  \[\integral{L_{z_n}}{}{|s_n(v)|}{v} <
    \mu(L_{z_n}) \|s_n\|_{\infty} =
    \tfrac{C}{2} \|s_n\|_{\infty} <
    \tfrac{1}{2} \|s_n\|_{1}
  \]
  by Lemma~\ref{lemma:Norms1AndInfty}, so we get
  \[
    \big|
         \langle s_n(u), \big| \tfrac{x}{y} \big|^u \sin\pi(u-z_n) \rangle
    \big| >
    \tfrac{M}{2} \|s_n\|_1.
    \qedhere
  \]
\end{proof}

\subsection{Step \ref{ThetaGoesTo0}
            {\boldmath$(\t_n \rightarrow 0)$}}
We will show in Lemma~\ref{lemma:TanBounds} that when $n$ is large
enough, the sequence $\{ \tan \t_n \}$ enters a decreasing regime that makes
it eventually converge to 0. This establishes the desired result.
  
First we derive two versions of the basic estimate for $\tan\t_{n+1}$:
\[
  \tan \t_{n+1} =
  \frac{\|P^{\perp} A_n s_n\|_2}{\|P A_n s_n\|_2} =
  \frac{\|P^{\perp}[Ts_n + (A_n-T)]\|_2}{\|P[Ts_n + (A_n-T)]\|_2} \leq
  \frac{\|P^{\perp}Ts_n\|_2 + \|P^{\perp}(A_n-T)s_n\|_2}
       {\big| \|PTs_n\|_2 - \|P(A_n-T)s_n\|_2 \big|},
\]
and that $\theta_n < \Theta$ allows us to remove the absolute value in the
denominator by assuming $n$ is large enough. Since $T$ commutes with the
projections $P$ and $P^{\perp}$, and using
Lemma~\ref{lemma:OperatorNormEstimate},
\begin{align}
  \tan \t_{n+1} & \leq
  \frac{\mu\|P^{\perp}s_n\|_2 + \|s_n\|_2/\sqrt{n}}
       {\l\|Ps_n\|_2 - \|s_n\|_2/\sqrt{n}} = \notag \\
  &= \frac{\sqrt{n}\mu\sin\t_n + 1}
          {\sqrt{n}\l\cos\t_n - 1} \label{eqn:AbsoluteTanBound} \\
  &= \( (
         \frac{\mu + \frac{1}{\sqrt{n}\sin\t_n}}
              {\l - \frac{1}{\sqrt{n}\cos\t_n}}
     \) )\tan \t_n. \label{eqn:RelativeTanBound}
\end{align}

\begin{lemma2}
 \label{lemma:TanBounds}
  Let $n >\frac{9}{\cos^2\Theta (\l-\mu)^2}$. Then there are constants $0 < \e
  < 1$ and $R > 0$ such that
  {\renewcommand{\theenumi}{\alph{enumi})}
   \renewcommand{\labelenumi}{\alph{enumi})}
  \begin{enumerate}
    \item If $\sqrt{n}\sin\t_n \geq \frac{3}{\l-\mu}$, then
          \[\tan\t_{n+1} < (1-\e) \tan\t_n.\]
    \item If $\sqrt{n}\sin\t_n \leq \frac{3}{\l-\mu}$, then
          \[\tan\t_{n+1} < \frac{R}{\sqrt{n}}.\]
  \end{enumerate} }
\end{lemma2}
In other words, when $n$ is sufficiently large, each step in the sequence
$\{ \tan\t_n \}$ affords a definite relative decrease, or a slower but
absolute decrease.

\begin{proof}[Proof of Lemma~\ref{lemma:TanBounds}]
\ \\
  {\renewcommand{\theenumi}{\alph{enumi})}
   \renewcommand{\labelenumi}{\alph{enumi})}
  \begin{enumerate}
    \item The angle $\t_n$ is smaller than $\Theta$, so the initial assumption
          on $n$ implies
          \[
            \l - \frac{1}{\sqrt{n}\cos\t_n} >
            \l - \frac{1}{\sqrt{n}\cos\Theta} >
            \l - \frac{\l-\mu}{3}.
          \]
          The condition $\sqrt{n}\sin\t_n \geq \frac{3}{\l-\mu}$ is equivalent
          to
          \[\mu + \frac{1}{\sqrt{n}\sin\t_n} < \mu + \frac{\l-\mu}{3},\]
          so \eqref{eqn:RelativeTanBound} is smaller than
          \[
            \( ( \frac{\mu+\frac{\l-\mu}{3}}{\l-\frac{\l-\mu}{3}} \) )
            \tan\t_n =
            \( ( \frac{2\mu+\l}{2\l+\mu} \) ) \tan\t_n.
          \]
          Since $\frac{2\mu+\l}{2\l+\mu} < 1$, this case is proved.
    \item When $\sqrt{n}\sin\t_n \leq \frac{3}{\l-\mu}$, we also have
          $\sqrt{n}\cos\t_n \geq \sqrt{n - \frac{9}{(\l-\mu)^2}}$.
          Substituting in \eqref{eqn:AbsoluteTanBound} gives
          \[
            \tan\t_{n+1} <
            \frac{\mu\frac{3}{\l-\mu}+1}{\l\sqrt{n-\frac{9}{(\l-\mu)^2}}-1},
          \]
          and the result follows. \qedhere
  \end{enumerate} }
\end{proof}

\subsection{Step \ref{ExponentialDecrease}
            {\boldmath$(|\int s_{n_k}| >$ cst\,$(\l-\e)^{n_k})$}}
We are ready to prove that
\begin{equation}
 \label{eqn:Success}
  \(| \integral{0}{1}{s_n(v)}{v} \)| \geq (\l-\e)^n
\end{equation}
along a subsequence of indices. The immediate consequence is factorial growth
of $|a_n|$, since $a_n = (n-2)! \cdot \integral{0}{1}{s_n(v)}{v}$.
Equation~\eqref{eqn:Success} follows at once from propositions
\ref{prop:snAndNorm2} and \ref{prop:Norm2AndLambda}.
\begin{prop}
 \label{prop:snAndNorm2}
  There is an infinite integer sequence $n_1 < n_2 < \ldots$, and a constant
  $0 < W \leq 1$ such that for all $k$,
  \[
    \(| \integral{0}{1}{s_{n_k}(v)}{v} \)| \geq
    \tfrac{W}{\sqrt{17}} \| s_{n_k} \|_2.
  \]
\end{prop}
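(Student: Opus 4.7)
The plan is to use $\theta_n \to 0$ to reduce the claim to an estimate on $Ps_n$, then exploit the rotational dynamics of $T|_E$ to produce a subsequence along which $Ps_n$ is not nearly perpendicular to the ``integration direction.''

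First, Cauchy--Schwarz gives $\bigl|\int_0^1 P^\perp s_n\bigr| \leq \|P^\perp s_n\|_2$, and by norm equivalence between the weighted and standard inner products on $[0,1]$ (the weight $|x/y|^{-2v}$ is continuous and positive) this is bounded by $C\sin\theta_n \|s_n\|_2$. Step \ref{ThetaGoesTo0} then makes it $o(\|s_n\|_2)$, so it suffices to bound $|\int_0^1 Ps_n|$ below by a fixed positive multiple of $\|s_n\|_2$ along a subsequence. Writing $Ps_n = \alpha_n g_1 + \beta_n g_2$ with $g_1(u) := |x/y|^u\cos\pi u$ and $g_2(u) := |x/y|^u\sin\pi u$, and setting $(I_c, I_s) := (\int g_1, \int g_2)$, one has $\int Ps_n = \alpha_n I_c + \beta_n I_s$. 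A direct evaluation of $\int_0^1 |x/y|^u e^{i\pi u}\,du$ shows $(I_c, I_s) \neq (0,0)$. By Step \ref{ThetaBoundedAway}, $\|Ps_n\|_2 \geq \cos\Theta\, \|s_n\|_2$ (in equivalent norms), so $\sqrt{\alpha_n^2+\beta_n^2}$ is comparable to $\|s_n\|_2$; the task thus reduces to finding a subsequence along which the unit direction $q_n := (\alpha_n,\beta_n)/\sqrt{\alpha_n^2+\beta_n^2}$ is bounded away from the pair of unit vectors $\pm\hat v$ perpendicular to $(I_c, I_s)$.

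Now exploit the dynamics. From $T(g_1 + ig_2) = \lambda_0(g_1+ig_2)$ one sees that $T|_E$ acts on the complex coordinate $\alpha + i\beta$ as multiplication by $\overline{\lambda_0}$; that is, as a rotation by $\omega := -\arg\lambda_0$ together with a scaling by $\lambda$. Since $\lambda_0 = -1/(\log|x/y| + \pi i)$ has nonzero imaginary part, $\omega \not\equiv 0 \pmod{\pi}$, so the rotation is nontrivial. Using the $T$-invariance of $E$ and $E^\perp$ together with Lemma \ref{lemma:OperatorNormEstimate},
\[
  Ps_{n+1} = T Ps_n + P(A_n - T)s_n,
  \qquad \|P(A_n - T)s_n\|_2 = O\bigl(\|s_n\|_2/\sqrt n\bigr).
\]
Dividing by $\|Ps_n\|_2 \asymp \|s_n\|_2$ shows that $q_{n+1}$ differs from the $\omega$-rotation $R_\omega q_n$ by $O(1/\sqrt n)$.

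Suppose for contradiction that every subsequential limit of $\{q_n\}$ lies in $\{\hat v, -\hat v\}$. Extract any convergent subsequence $q_{n_k} \to \hat v$; then the asymptotic-rotation relation forces $q_{n_k+1} \to R_\omega \hat v$, which is itself a subsequential limit and so must also lie in $\{\hat v, -\hat v\}$. That would give $R_\omega \hat v = \pm\hat v$, hence $\omega \equiv 0 \pmod\pi$, a contradiction. Consequently some cluster point $\hat v_\ast$ satisfies $|\hat v_\ast \cdot (I_c, I_s)| \geq c > 0$, and along the associated subsequence $|\int Ps_{n_k}| \geq c'\|s_{n_k}\|_2$; combined with the first paragraph this yields the proposition, with the specific constant $W/\sqrt{17}$ produced by tracking the numerical factors. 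The main obstacle is the approximate-rotation claim itself: one must carefully control how the $L^2$-operator-norm error from Lemma \ref{lemma:OperatorNormEstimate} passes through the (weighted) projection $P$ and survives division by $\|Ps_n\|_2$, so that normalized directions genuinely track the rotation in the $n \to \infty$ limit.
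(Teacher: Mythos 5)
Your proposal is correct, and it takes a genuinely different route from the paper. Both proofs reduce the claim to showing that along a subsequence the direction $q_n := Ps_n/\|Ps_n\|_2$ (in coordinates on the two-dimensional eigenspace $E$ spanned by $g_1(u) = |x/y|^u\cos\pi u$ and $g_2(u) = |x/y|^u\sin\pi u$) stays bounded away from the line perpendicular to the integration vector $\big(\int_0^1 g_1, \int_0^1 g_2\big)$, and both exploit the fact that $T|_E$ acts by a rotation; the difference is in how the subsequence is extracted. The paper's Lemma~\ref{lemma:ProjectionAndNorm1} is a two-step argument: if index $n$ fails the desired alignment bound, then $n+1$ must satisfy it, because $T$ is said to advance the phase of $Ps_n$ by exactly one half period. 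You avoid committing to a specific rotation angle: you need only $\omega = -\arg\lambda_0 \not\equiv 0 \pmod{\pi}$, which is immediate from $\mathrm{Im}\,\lambda_0 \neq 0$, and then compactness of the unit circle together with the relation $q_{n+1} = R_\omega q_n + O(1/\sqrt{n})$ yields a contradiction if every cluster point of $\{q_n\}$ were perpendicular to the integration vector. This is in fact more robust: $\arg\lambda_0 = \arg(-\log|x/y| + \pi i)$ equals $\pi/2$ exactly when $\log|x/y| = 0$, i.e.\ $x = -1/2$, so for general parameter values the phase-shift-by-one-half in the paper is not literal, whereas your cluster-point argument is indifferent to the exact angle. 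Two small points are worth stating explicitly: $P$ is orthogonal in the \emph{weighted} inner product~\eqref{eqn:InnerProduct} rather than the standard one, so $\|P(A_n-T)s_n\|_2$ is controlled by $\|(A_n-T)s_n\|_2$ only up to the norm-equivalence constant coming from the weight (a point the paper's own proof also elides); and your closing assertion that ``tracking the numerical factors'' produces the specific constant $W/\sqrt{17}$ is loose --- your argument produces some unspecified positive constant --- but since $W$ is itself unspecified in the statement, this has no mathematical content.
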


\begin{prop}
 \label{prop:Norm2AndLambda}
  For every $\e>0$ there are constants $G,N>0$ such that for $n>N$,
  \[\| s_n \|_2 > G (\l-\e)^n.\]
\end{prop}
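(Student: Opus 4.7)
The plan is to combine the near-sinusoidal behavior of $s_n$ from Step~\ref{ThetaGoesTo0} with the spectral structure of $T$ and argue that each application of $A_n$ preserves norm up to a factor arbitrarily close to $\l$.

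First I would transfer the estimate to the weighted inner product of \eqref{eqn:InnerProduct}, with corresponding norm $\|\cdot\|_w$. Since the weight $|x/y|^{-2v}$ is continuous and positive on the compact set $[0,1]$, $\|\cdot\|_w$ is equivalent to $\|\cdot\|_2$, so it suffices to establish a lower bound $\|s_n\|_w \geq G'(\l-\e)^n$. By the same equivalence, the $L^2$ convergence $\t_n \to 0$ from Step~\ref{ThetaGoesTo0} carries over to the weighted norm: writing $s_n = Ps_n + P^\perp s_n$, the analogous angle $\t_n^w$ defined by $\sin \t_n^w = \|P^\perp s_n\|_w / \|s_n\|_w$ also tends to $0$.

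The key spectral fact I would then invoke is that on the real $2$-dimensional eigenspace $E$, the operator $T$ is a $\l$-scaled isometry in the weighted norm. A direct computation confirms that $\{f_m\}_{m\in\Z}$ is orthonormal for \eqref{eqn:InnerProduct}, so for any real $f = c f_0 + \bar c f_{-1} \in E$,
\[
  \|Tf\|_w^2 = |c|^2|\l_0|^2 + |c|^2|\l_{-1}|^2 = \l^2 \|f\|_w^2.
\]
Because $E$ and $E^{\perp}$ are weighted-orthogonal, this gives
\[
  \|Ts_n\|_w \geq \|T P s_n\|_w = \l \cos\t_n^w \,\|s_n\|_w.
\]

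Writing $s_{n+1} = Ts_n + (A_n-T)s_n$, Lemma~\ref{lemma:OperatorNormEstimate} together with norm equivalence yields a constant $c>0$ such that
\[
  \|s_{n+1}\|_w \geq \bigl( \l\cos\t_n^w - \tfrac{c}{\sqrt{n}} \bigr) \|s_n\|_w.
\]
Given $\e > 0$, I would choose $N$ so that $\l\cos\t_n^w - c/\sqrt{n} > \l - \e$ for every $n > N$; iterating then gives $\|s_n\|_w \geq (\l-\e)^{n-N} \|s_N\|_w$, and converting back to $\|\cdot\|_2$ finishes the proof. The main point to check carefully is the $\l$-isometry property of $T|_E$, which depends on both $|\l_0|$ and $|\l_{-1}|$ equaling $\l$ and on the weighted orthogonality of $f_0,f_{-1}$; once that is in hand, the rest is a standard spectral perturbation in which the $O(1/\sqrt n)$ error of $A_n - T$ is absorbed by the gap created by $\cos\t_n^w$ being close to $1$.
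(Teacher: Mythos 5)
Your proposal is correct and follows the same basic route as the paper: decompose $A_n = T + (A_n - T)$, bound $\|T s_n\|$ from below via the spectral behavior of $T$ on the top eigenspace $E$, bound the perturbation by the operator-norm estimate of Lemma~\ref{lemma:OperatorNormEstimate}, and iterate the resulting inequality $\|s_{n+1}\| \geq (\l\cos\t_n - O(1/\sqrt n))\|s_n\|$ using $\t_n \to 0$ from Step~\ref{ThetaGoesTo0}. Where you go beyond the paper's proof is in making the inner product bookkeeping explicit: the paper writes $\|\cdot\|_2$ throughout and silently conflates the standard $L^2$ inner product with the weighted one of~\eqref{eqn:InnerProduct}, even though the identity $\|TP s_n\| = \l\cos\t_n\|s_n\|$ and the orthogonality of $P$ and $P^\perp$ really live in the weighted inner product (where $\{f_m\}$ is orthonormal), whereas the $1/\sqrt n$ bound of Lemma~\ref{lemma:OperatorNormEstimate} lives in the standard $L^2$ norm. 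You correctly resolve this by moving everything to $\|\cdot\|_w$, verifying the $\l$-scaled isometry of $T|_E$ directly, and absorbing the norm-equivalence constant into the $1/\sqrt n$ error term. You also make explicit that the final step genuinely uses $\t_n \to 0$ (not merely $\t_n < \Theta$, which by itself only guarantees positivity of the factor), which the paper's one-line conclusion ``Since $\t_n < \Theta$, the result follows'' leaves a bit compressed. In short: same argument, more careful.
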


The proof of Proposition~\ref{prop:snAndNorm2} uses the following auxiliary
result:

\begin{lemma2}
 \label{lemma:ProjectionAndNorm1}
  There is an infinite integer sequence $n_1 < n_2 < \ldots$ such that for
  all $k$,
  \begin{equation}
   \label{eqn:Ptriangle}
    \(| \integral{0}{1}{P s_{n_k}(v)}{v} \)| \geq
    \tfrac{1}{2} \integral{0}{1}{|P s_{n_k}(v)|}{v}.
  \end{equation}
\end{lemma2}

We prove Lemma~\ref{lemma:ProjectionAndNorm1} first, and then propositions
\ref{prop:snAndNorm2} and \ref{prop:Norm2AndLambda}.

\begin{proof}[Proof of Lemma~\ref{lemma:ProjectionAndNorm1}]
  First notice that $\integral{0}{1}{\sin(\pi(u+\omega))}{v} = \tfrac{2}{\pi}
  \cos \pi\omega$, so
  \begin{equation}
   \label{eqn:OutOfPhaseSine}
    \integral{0}{1}{\sin(\pi(u+\omega))}{v} \geq
    \tfrac{1}{\sqrt{2}} \integral{0}{1}{\sin(\pi u)}{v}
    \text{ if and only if }
    |\omega \imod{1}| \leq 1/4.
  \end{equation}

  The lemma will follow once we prove that if a large enough $n$ does not
  satisfy \eqref{eqn:Ptriangle}, then $n+1$ does. Accordingly, assume that
  \[
    \(| \integral{0}{1}{P s_n(v)}{v} \)| \leq
    \tfrac{1}{2}\integral{0}{1}{|P s_n(v)|}{v} <
    \tfrac{1}{\sqrt{2}} \integral{0}{1}{|P s_n(v)|}{v}.
  \]
  Since the function $P s_n$ is in $E$, it has the form $P s_n(u) = B \sin
  (\pi(u+\omega))$ for some constants $B$, $\omega$. According to
  \eqref{eqn:OutOfPhaseSine}, the assumption above means that $|\omega
  \imod{1}| \geq 1/4$. Now,
  \begin{equation}
   \label{eqn:Ps{n+1}Broken}
    \(| \integral{0}{1}{P s_{n+1}(v)}{v} \)| =
    \(|
       \integral{0}{1}{PT s_n(v)}{v} +
       \integral{0}{1}{P(A_n - T)s_n(v)}{v}
    \)|.
  \end{equation}
  We estimate both terms on the right side.  On one hand, since $P$
  commutes with $T$, and $|(\omega+1/2) \imod{1}| \leq 1/4$,
  \begin{gather*}
    \(| \integral{0}{1}{PT s_n(v)}{v} \)| =
    \(| \integral{0}{1}{\l B \sin(\pi(v+\omega+1/2))}{v} \)| \geq \\
    \tfrac{\l}{\sqrt{2}} \integral{0}{1}{|B \sin(\pi(u+\omega+1/2))|}{v} =
    \tfrac{\l}{\sqrt{2}} \integral{0}{1}{|B \sin(\pi(u+\omega))|}{v} =
    \tfrac{\l}{\sqrt{2}} \|P s_n\|_1.
  \end{gather*}

  On the other hand,
  \begin{gather*}
    \(| \integral{0}{1}{P(A_n - T)s_n(v)}{v} \)| \leq
    \integral{0}{1}{|P(A_n - T)s_n(v)|}{v} = \\
    \|P(A_n - T) s_n\|_1 \leq
    \|P(A_n - T) s_n\|_2 \leq
    \|(A_n - T) s_n\|_2 \leq
    \tfrac{1}{\sqrt{n}} \|s_n\|_2
  \end{gather*}
  by Lemma~\ref{lemma:OperatorNormEstimate}. Since $\t_n < \Theta < \pi/2$,
  the last quantity is smaller than
  \[
    \tfrac{1}{\cos\Theta\sqrt{n}} \|P s_n\|_2 \leq
    \tfrac{1}{\cos\Theta\sqrt{n}} \tfrac{2\sqrt{2}}{\pi} \|P s_n\|_1,
  \]
  where the last estimate comes from comparing the 1- and 2-norms of a sine
  function. Altogether, plugging both estimates in \eqref{eqn:Ps{n+1}Broken}
  gives
  \begin{equation}
   \label{eqn:LadyGaga}
    \(| \integral{0}{1}{P s_{n+1}(v)}{v} \)| \geq
    \big(
         \tfrac{\l}{\sqrt{2}} -
         \tfrac{2\sqrt{2}}{\pi\cos\Theta\sqrt{n}}
    \big) \|P s_n \|_1.
  \end{equation}
  It only rests to compare $\|P s_n\|_1$ with $\|P s_{n+1}\|_1 = \|PT s_n +
  P(A_n-T) s_n\|_1$. Since $P$ and $T$ commute, $\|PT s_n\|_1 = \l\|P
  s_n\|_1$. Also, $\|P(A_n-T) s_n\|_1 \leq
  \tfrac{2\sqrt{2}}{\pi\cos\Theta\sqrt{n}} \|P s_n\|_1$, as we saw above.
  This gives
  \[
    \|P s_n\|_1 \geq
    \big( \l + \tfrac{2\sqrt{2}}{\pi\cos\Theta\sqrt{n}} \big)^{-1}
    \|P s_{n+1}\|_1,
  \]
  which, plugged back in \eqref{eqn:LadyGaga} gives
  \[
    \(| \integral{0}{1}{P s_{n+1}(v)}{v} \)| \geq
    \( ( \frac{\tfrac{\l}{\sqrt{2}} - \tfrac{2\sqrt{2}}{\pi\cos\Theta\sqrt{n}}}
         {\l + \tfrac{2\sqrt{2}}{\pi\cos\Theta\sqrt{n}}} \) )
      \|P s_{n+1}\|_1.
  \]
  For large enough $n$ the last quantity is larger than $\tfrac{1}{2} \|P
  s_{n+1}\|_1$, and the result follows.
\end{proof}

With the above result we are ready to prove propositions \ref{prop:snAndNorm2}
and \ref{prop:Norm2AndLambda}, establishing \eqref{eqn:Success} and our main
result.

\begin{proof}[Proof of Proposition~\ref{prop:snAndNorm2}]
  Consider the sequence $\{ n_k \}$ from Lemma~\ref{lemma:ProjectionAndNorm1},
  truncated in the beginning so that the right hand expression in
  \begin{multline}
   \label{eqn:snAndNorm1}
    \(| \integral{0}{1}{s_{n_k}(v)}{v} \)| =
    \(|
       \integral{0}{1}{P s_{n_k}(v)}{v} +
       \integral{0}{1}{P^{\perp} s_{n_k}(v)}{v}
    \)| \geq \\
    \(| \integral{0}{1}{P s_{n_k}(v)}{v} \)| -
       \(| \integral{0}{1}{P^{\perp} s_{n_k}(v)}{v} \)|
  \end{multline}
  is positive. Let us evaluate both terms. By
  Lemma~\ref{lemma:ProjectionAndNorm1},
  \[
    \(| \integral{0}{1}{P s_{n_k}(v)}{v} \)| \geq
    \tfrac{1}{2} \|P s_{n_k} \|_1 \geq
    \tfrac{W}{2} \|P s_{n_k} \|_2,
  \]
  where $0 < W \leq 1$ is a lower bound on the quotient of the 1- and 2-norms
  of $\(| \tfrac{x}{y} \)|^u \sin\pi(u+\phi)$ (here $0 \leq \phi \leq 1$ is an
  arbitrary phase shift). On the other hand, the triangle and Cauchy-Schwarz
  inequalities give
  \[
    \(| \integral{0}{1}{P^{\perp} s_{n_k}(v)}{v} \)| \leq
    \|P^{\perp} s_{n_k} \|_2.
  \]
  If $n_k$ is sufficiently large, then $\tan \theta_{n_k} < W/4 \leq 1/4$,
  where $W$ is as before, and we get
  \[
    \|P^{\perp} s_{n_k} \|_2 =
    \tan \theta_{n_k} \|P s_{n_k} \|_2 \leq
    \tfrac{W}{4} \|P s_{n_k} \|_2.
  \]
  Plugging these estimates back in \eqref{eqn:snAndNorm1} gives
  \[
    \(| \integral{0}{1}{s_{n_k}(v)}{v} \)| \geq
    \tfrac{W}{2} \|P s_{n_k}\|_2 - \tfrac{W}{4} \|P s_{n_k}\|_2 =
    \tfrac{W}{4} \|P s_{n_k}\|_2 =
    \tfrac{W}{4} \cos \theta_{n_k} \|s_{n_k}\|_2 >
    \tfrac{W}{\sqrt{17}} \|s_{n_k}\|_2,
  \]
  since $\tan \theta_{n_k} < 1/4$.
\end{proof}

\begin{proof}[Proof of Proposition~\ref{prop:Norm2AndLambda}]
  For large enough $n$, the right hand side of
  \[
    \|s_{n+1}\|_2 =
    \|T s_n + (A_n-T) s_n\|_2 \geq
    \|T s_n\|_2 - \|(A_n-T) s_n\|_2
  \]
  is positive. In fact, the two terms on the right have the bounds
  \[
    \|T s_n\|_2 \geq
    \|TP s_n\|_2 =
    \l \cos \theta_n \|s_n\|_2,
  \]
  and
  \[
    \|(A_n-T) s_n\|_2 \leq
    \tfrac{1}{\sqrt{n}}\|s_n\|_2
  \]
  by Lemma~\ref{lemma:OperatorNormEstimate}, so
  \[
    \|s_{n+1}\|_2 \geq
    \big( \l\cos\theta_n - \tfrac{1}{\sqrt{n}} \big) \|s_n\|_2.
  \]
  Since $\t_n < \Theta$, the result follows.
\end{proof}

\section{Proof of Lemma~\ref{lemma:Norms1AndInfty}}
\label{sect:Proof-Norms1AndInfty}
The inequality $\|s_n\|_1 \leq \|s_n\|_{\infty}$ is trivial, but the opposite
direction requires estimates based on the shape of the sequences
$S_n$. Because of the rescaling
\[\frac{S_n(j)}{(n-2)!} = s_n \( (\tfrac{j-1/2}{n-1} \) ),\]
all statements about the shape of the sequence $S_n$ can be interpreted as
applying to the function $s_n$. The idea of the proof is as follows: Our
definition of inflection yields a natural concept of concavity for step
functions. Within each interval of concavity we find suitable linear functions
that bound $|s_n|$ from below as illustrated in
Figure~\ref{fig:n1Estimate}. Then we show that these bounds are comparable to
the maximum $\|s_n\|_{\infty}$.

\medskip\noindent
{\bf Basic assumption:} Let the sign change of $S_n$ be located at $Z$, the
extreme at $E$, and the inflection at $I$. We will assume that the extreme is
a minimum and that $E < I$, the other cases being analogous by symmetry. Note
that $S_{n-1}$ has a minimum at $I$ and a down-change at $E$. In particular,
$S_{n-1}$ is negative to the right of $E$, so $S_{n-1}(0) = \frac{y}{-x}
S_{n-1}(n-2) > 0$. Then $S_{n-1}(1) < S_{n-1}(0)$ by
property~\ref{HemiboundingBand}, and $S_{n-1}(1) > 0$ because it is to the
left of the down-change.

\begin{defn}
  To avoid carrying factors of $\frac{1}{n-1}$ we follow the convention that
  indices from $1$ to $n-1$ are represented by capital letters, and their
  counterparts in the interval $[0,1]$ by the corresponding lowercase
  letter. In particular, we let $e := \frac{E-1/2}{n-1}$, $i :=
  \frac{I-1/2}{n-1}$, and $z := \frac{z-1/2}{n-1}$.
\end{defn}

\begin{defn}
  For given $n$, and integers $1 \leq A,B \leq n-1$, let $a :=
  \frac{A-1/2}{n-1}$, $b := \frac{B-1/2}{n-1}$. We denote by
  $\Lambda_n(a,w_1,b,w_2)$ the linear function whose graph is the straight
  line from $\big( a,w_1 \big)$ to $\big( b,w_2 \big)$.  Also, let $\mu(a,b)$
  be the length $b-a$ of the interval $[a,b]$.
\end{defn}
For our purposes, $w_j$ will always be 0 or $s_n(a)$ for some $a \in [0,1]$.
Consequently, if $s_n$ is positive, and $s_{n-1}$ is increasing (so $s_n$ is
``concave'') from $a$ to $b$, the function $\Lambda_n(A,s_n(a),B,0)$ is also
positive in $(a, b)$. Moreover, its integral gives a lower bound for $\int
|s_n|$ on every intermediate interval where $s_n$ is constant. \\

\begin{figure}[h]\refstepcounter{figure}\addtocounter{figure}{-1}
 \label{fig:n1Estimate}
  \begin{center}
    \includegraphics{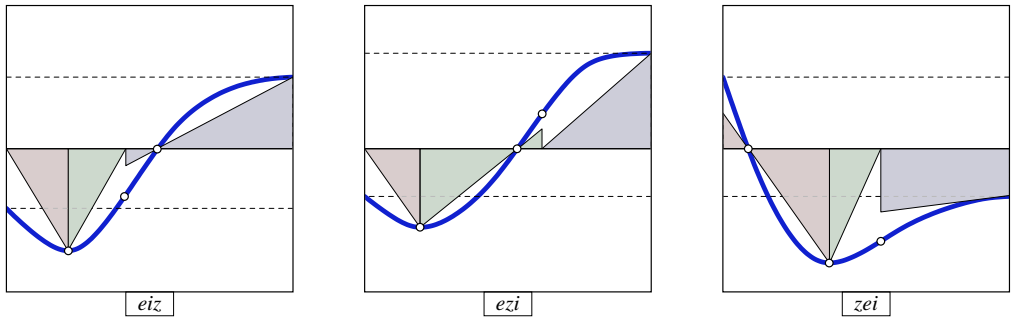}
    \caption{These continuous functions are caricatures of the step function
             $s_n$ in the different situations of cases 1, 2, 3. The 1-norm is
             bounded from below by the shaded areas under the linear
             functions. We show that these areas are comparable to the maximum
             $\|s_n\|_{\infty}$.}
  \end{center}
\end{figure}

The function $s_n$ can adopt one of three forms depending on the order of $e$,
$i$, and $z$. In each case we split $[0,1]$ into the same three intervals
$[0,e]$, $[e,i]$, and $[i,1]$, and describe linear functions on these
intervals that bound $|s_n|$ from below (compare Figure~\ref{fig:n1Estimate}).

{\renewcommand{\theenumi}{\arabic{enumi}}
\renewcommand{\labelenumi}{\underline{Case \arabic{enumi}}}
\begin{enumerate}
\item \label{eiz}
      $(e<i<z)$: The linear functions are
      \begin{enumerate}
        \item \label{eiz-0e}
              On $[0,e]$: $\Lambda(0,0,e,s_n(e))$. The area of the triangle is
              \[\boxed{\tfrac{1}{2} \cdot |s_n(e)| \cdot \mu(0,e)}\]
        \item \label{eiz-ei}
              On $[e,i]$: $\Lambda(e,s_n(e),i,0)$. The area of the triangle is
              \[\boxed{\tfrac{1}{2} \cdot |s_n(e)| \cdot \mu(e,i)}\]
        \item \label{eiz-i1}
              On $[i,1]$: $\Lambda(z,0,1,s_n(1))$. The slope is
              $\frac{s_n(1)}{\mu(z,1)}$ so the areas of the two triangles are
              \[
                \frac{|s_n(1)|}{\mu(z,1)} \cdot
                  \( ( \frac{\mu(z,1)^2}{2} + \frac{\mu(i,z)^2}{2} \) ) \geq
                \frac{|s_n(1)|}{\mu(z,1)} \cdot \frac{\mu(i,1)^2}{4} \geq
                \boxed{\tfrac{1}{4} \cdot|s_n(1)| \cdot \mu(i,1)}
              \]
      \end{enumerate}
\item \label{ezi}
      $(e<z<i)$: The linear functions are
      {\renewcommand{\theenumii}{(\alph{enumii})}
      \renewcommand{\labelenumii}{(\alph{enumii})}
      \begin{enumerate}
        \item \label{ezi-0e}
              On $[0,e]$: $\Lambda(0,0,e,s_n(e))$. The area of the triangle is
              \[\boxed{\tfrac{1}{2} \cdot |s_n(e)| \cdot \mu(0,e)}\]
        \item \label{ezi-ei}
              On $[e,i]$: $\Lambda(e,s_n(e),z,0)$. The slope is
              $\frac{|s_n(e)|}{\mu(e,z)}$. Since $a^2+b^2 \geq (a+b)^2/2$, the
              areas of the two triangles are
              \[
                \frac{|s_n(e)|}{\mu(e,z)} \cdot
                  \( ( \frac{\mu(e,z)^2}{2} + \frac{\mu(z,i)^2}{2} \) ) \geq
                \frac{|s_n(e)|}{\mu(e,z)} \cdot \frac{\mu(e,i)^2}{4} \geq
                \boxed{\tfrac{1}{4} \cdot|s_n(e)| \cdot \mu(e,i)}
              \]
        \item \label{ezi-i1}
              On $[i,1]$: $\Lambda(i,0,1,s_n(1))$. The area of the triangle is
              \[\boxed{\tfrac{1}{2} \cdot |s_n(1)| \cdot \mu(i,1)}\]
      \end{enumerate}
\item \label{zei}
      $(z<e<i)$: The linear functions are
      \begin{enumerate}
        \item \label{zei-0e}
              On $[0,e]$: $\Lambda(z,0,e,s_n(e))$. The slope is
              $\frac{s_n(e)}{\mu(z,e)}$ so the areas of the two triangles are
              \[
                \frac{|s_n(e)|}{\mu(z,e)} \cdot
                  \( ( \frac{\mu(0,z)^2}{2} + \frac{\mu(z,e)^2}{2} \) ) \geq
                \frac{|s_n(e)|}{\mu(z,e)} \cdot \frac{\mu(0,e)^2}{4} \geq
                \boxed{\tfrac{1}{4} \cdot|s_n(e)| \cdot \mu(0,e)}
              \]
        \item \label{zei-ei}
              On $[e,i]$: $\Lambda(e,s_n(e),i,0)$. The area of the triangle is
              \[\boxed{\tfrac{1}{2} \cdot |s_n(e)| \cdot \mu(e,i)}\]
        \item \label{zei-i1}
              On $[i,1]$: $\Lambda(a,s_n(a),b,s_n(b))$, where $A =
              n-3$, $B = n-2$, and  $a = \frac{A-1/2}{n-1}$, $b =
              \frac{B-1/2}{n-1}$. The region bounded by this $\Lambda$
              consists of a rectangle of base $\mu(i,1)$ and height $s_n(1)$,
              plus a triangle of base $\mu(i,1)$ and slope $s_{n-1}(b)$. The
              area of $s_n$ is at least
              \[
                \boxed{
                  |s_n(1)| \cdot \mu(i,1) +
                  |s_{n-1}(b)| \cdot \tfrac{\mu(i,1)^2}{2}.
                }
              \]
      \end{enumerate} }
\end{enumerate} }

In each of the three cases, $\|s_n\|_1$ is bounded by a sum of three
estimates. A trivial weakening of these expressions allows us to consolidate
cases \ref{eiz} and \ref{ezi} into one:
\begin{itemize}
  \item [\underline{\ref{eiz}{\tiny\&}\ref{ezi}}:]
        If $(e<z)$, then $\|s_n\|_1 \leq \tfrac{1}{4} \Big( |s_n(e)| \cdot
        \mu(0,i) + |s_n(1)| \cdot \mu(i,1) \Big)$.
  \item [\underline{\ref{zei}}:]
        If $(z<e)$, then $\|s_n\|_1 \leq \tfrac{1}{4} \Big( |s_n(e)| \cdot
        \mu(0,i) + |s_n(1)| \cdot \mu(i,1) + |s_{n-1}(b)| \cdot
        \mu(i,1)^2 \Big)$.
\end{itemize}

Note that $|s_n(e)| \geq |s_n(0)| = \frac{|x|}{y}|s_n(1)|$. If the interval
$[0,i]$ has definite size, say $\mu(0,i) > \frac{|x|}{6}$, then we can neglect
the portion of the bounds that contains $\mu(i,1)$ and see that
\[
  \|s_n\|_1 \geq
  \tfrac{1}{4}\tfrac{|x|}{6}|s_n(e)| \geq
  \tfrac{x^2}{24}|s_n(1)|
\]
in all three cases. Since $\|s_n\|_{\infty}$ is either $|s_n(e)|$ or
$|s_n(1)|$, we find
\[\|s_n\|_1 \geq \tfrac{x^2}{24y}\|s_n\|_{\infty}.\]

To finish the proof we have to consider what happens when $\mu(0,i) <
\frac{|x|}{6}$. In this situation we neglect the portion of the bounds that
contains $\mu(0,i)$, and show that both $|s_n(1)|$ and $|s_{n-1}(b)|$ have
lower bounds \eqref{eqn:Final1}, \eqref{eqn:Final2}, \eqref{eqn:Final3} of the
form constant times $|s_n(e)|$. Just as above, this implies a lower bound for
$\|s_n\|_1$ in terms of $\|s_n\|_{\infty}$, and we are done.

As a note of caution, note that for this final step we revert to the language
of sequences $S_n$. Thus, instead of seeking bounds for $s_n(1), s_{n-1}(b)$
in terms of $s_n(e)$, we get equivalent bounds for $S_n(n-2), S_{n-1}(n-3)$ in
terms of $S_n(E)$.

\begin{lemma2}
 \label{lemma:Cheesecake}
  If $I < \frac{|x|n}{6}$ then $|S_{n-1}(1)| \leq \frac{1}{2} |S_n(0)|$.
\end{lemma2}

From Lemma~\ref{lemma:Cheesecake} follows $|S_n(0)| \leq
\tfrac{2}{3}|S_n(1)|$ (see \eqref{eqn:BasicSequenceRelation}). Hence, if
$|S_n(1)| \geq \frac{|S_n(E)|}{2}$, we get
\begin{equation}
 \label{eqn:Final1}
  S_n(n-1) =
  \frac{y}{|x|} |S_n(0)| \geq
  \frac{2}{3}\frac{y}{|x|} |S_n(1)| \geq
  \frac{y}{3|x|} |S_n(E)|.
\end{equation}
It rests only to consider what happens when
\[|S_n(1)| \leq \frac{|S_n(E)|}{2}.\]

If this is the case, property~\ref{SlopesAtEndpoints} gives the following bound
for cases \ref{eiz} and \ref{ezi}:
\begin{equation}
 \label{eqn:Final2}
  \frac{|S_n(1)|}{\mu((1-\eta),1)} \geq
  \frac{y}{|x|} \frac{|S_n(E) - S_n(1)|}{\mu(0,E)} \geq
  \frac{y}{|x|} |S_n(E) - S_n(1)|,
\end{equation}
where $\eta = \max \{ I,Z \}$.

In case~\ref{zei}, property~\ref{SlopesAtEndpoints} gives a different bound:
\begin{equation}
 \label{eqn:Final3}
  |S_{n-1}(n-3)| \geq \frac{y}{|x|} \frac{|S_n(E)|}{\mu(Z,I)}.
\end{equation}

\begin{proof}[Proof of Lemma~\ref{lemma:Cheesecake}]
  Throughout this section the basic assumption has been that $S_n(E) < 0$
  and $E < I$, so that $S_{n-1}$ has a minimum at $I$ and a down-change at
  $E$. Recall that this implies $S_{n-1}$ is negative to the right of $E$,
  and $0 < S_{n-1}(1) < S_{n-1}(0)$. \\

  First we derive Inequality~\eqref{eqn:Ineq}. Since $y-x = 1$ and $n/2 \leq
  n-1$,
  \[
    I <
    \frac{-xn}{6} =
    \frac{-x (n/2)}{2+y-x} \leq
    \frac{-x (n-1)}{2+y-x},
  \]
  so
  \[\frac{-x(n-1-I)}{(2+y)I} \geq 1.\]
  The left expression increases if $I$ is replaced below by $E$:
  \[\frac{-x(n-1-I)}{(2+y)E} \geq 1,\]
  giving
  \begin{equation}
   \label{eqn:yE+mx<0}
    -2E \geq yE + x(n-1-I).
  \end{equation}
  Now, the right expression is negative because it is smaller than
  $yI + x(n-1-I)
  = I + x(n-1) < I + \frac{xn}{6} < 0$, so dividing in \eqref{eqn:yE+mx<0} and
  multiplying by $y$ gives
  \begin{equation}
   \label{eqn:Ineq}
    \frac{-yE}{yE + x(n-1-I)} \leq
    \frac{y}{2}.
  \end{equation}

  \medskip
  Using inequality~\eqref{eqn:Ineq} we prove Lemma~\ref{lemma:Cheesecake} as
  follows. The absolute value of $S_{n-1}$ is decreasing from 1 to $E$, and
  from $I$ to $n-2$. In the second of these spans the average exceeds the
  rightmost term $S_{n-1}(n-2)$; thus,
  \begin{gather}
   \label{eqn:A}
    \(| \sum_{j=1}^E S_{n-1}(j) \)| <
    E |S_{n-1}(1)| <
    E |S_{n-1}(0)| =
    E \(| \tfrac{y}{x} S_{n-1}(n-2) \)| < \\ \notag
    \tfrac{yE}{|x|m} \(| \sum_{j=I}^{n-2} S_{n-1}(j) \)| <
    \tfrac{yE}{|x|m} \(| \sum_{j=E+1}^{n-2} S_{n-1}(j) \)|.
  \end{gather}
  where $m := n-1-I$.

  Substitute this estimate in
  \begin{gather}
   \label{eqn:B}
    \(| \sum_{j=1}^{n-2} S_{n-1}(j) \)| =
    \(| \sum_{j=E+1}^{n-2} S_{n-1}(j) \)| -
      \(| \sum_{j=1}^{E} S_{n-1}(j) \)| \geq \\
    \( ( 1 - \tfrac{yE}{|x|m} \) ) \(| \sum_{j=E+1}^{n-2} S_{n-1}(j) \)|.
    \notag
  \end{gather}

  Now, using \eqref{eqn:A}, \eqref{eqn:B}, and \eqref{eqn:Ineq},
  \begin{gather*}
    |S_{n-1}(1)| <
    \(| \sum_{j=1}^E S_{n-1}(j) \)| <
    \tfrac{yE}{|x|m} \(| \sum_{j=E+1}^{n-2} S_{n-1}(j) \)| < \\
    \frac{\frac{yE}{|x|m}}{\( ( 1 - \frac{yE}{|x|m} \) )}
      \(| \sum_{j=1}^{n-2} S_{n-1}(j) \)| =
    \tfrac{-yE}{yE-|x|m} \(| \sum_{j=1}^{n-2} S_{n-1}(j) \)| \leq
    \tfrac{y}{2} \(| \sum_{j=1}^{n-2} S_{n-1}(j) \)| =
    \tfrac{1}{2}|S_n(0)|.
    \qedhere
  \end{gather*}
\end{proof}

\section*{Acknowledgments}
We wish to give thanks to F.~Przytycki and the Polish Academy of Sciences for
their hospitality at the conference center in B{\k e}dlewo, Poland; to the
Institut Mittag-Leffler for its hospitality during the final phase of this
project; to M.~Benedicks for conversations that led to the project this work
stems from; and to P.~Bleher for suggesting the use of integral operator
theory as a tool for estimating asymptotic growth.

}% To make the definition of lambda local.

\nocite{*}
\bibliographystyle{plain}
\bibliography{catalan}

\begin{thebibliography}{1}

\bibitem{MR1070713}
John~B. Conway.
\newblock {\em A course in functional analysis}, volume~96 of {\em Graduate
  Texts in Mathematics}.
\newblock Springer-Verlag, New York, second edition, 1990.

\bibitem{MR949280}
Frank Harary, John~P. Hayes, and Horng-Jyh Wu.
\newblock A survey of the theory of hypercube graphs.
\newblock {\em Comput. Math. Appl.}, 15(4):277--289, 1988.

\bibitem{MR1335452}
Tosio Kato.
\newblock {\em Perturbation theory for linear operators}.
\newblock Classics in Mathematics. Springer-Verlag, Berlin, 1995.
\newblock Reprint of the 1980 edition.

\bibitem{MR1086562}
Micha{\l} Misiurewicz and Zbigniew Nitecki.
\newblock Combinatorial patterns for maps of the interval.
\newblock {\em Mem. Amer. Math. Soc.}, 94(456):vi+112, 1991.

\bibitem{OEIS}
N.~J.~A. Sloane.
\newblock The on-line encyclopedia of integer sequences, 2010.
\newblock Published electronically at {\tt
  www.research.att.com/$\sim$njas/sequences/}.

\bibitem{MR1676282}
Richard~P. Stanley.
\newblock {\em Enumerative combinatorics. {V}ol. 2}, volume~62 of {\em
  Cambridge Studies in Advanced Mathematics}.
\newblock Cambridge University Press, Cambridge, 1999.
\newblock With a foreword by Gian-Carlo Rota and appendix 1 by Sergey Fomin.

\end{thebibliography}

\end{document}